\newtheorem{theo}{Theorem}[section]
\newtheorem{lema}[theo]{Lemma}
\newtheorem{coro}[theo]{Corollary}
\newtheorem{prop}[theo]{Proposition}
\theoremstyle{definition}
\newtheorem{defi}[theo]{Definition}
\def\mut{\widetilde{\mu}}
\def\H{\mathbb{H}}
\def\C{\mathbb{C}}
\def\N{\mathcal{N}}
\def\R{\mathbb{R}}
\def\Sc{\operatorname{Sc}}
\def\Vec{\operatorname{Vec}}
\def\arctanh{\operatorname{arctanh}}
\def\DS{\displaystyle}
\newcommand{\coef}[2]{#1_{#2}}
\newcommand{\cUU}[1]{\coef{u}{#1}}
\newcommand{\cVV}[1]{\coef{v}{#1}}
\newcommand{\cVVmu}[1]{\coef{v}{#1}}
\newcommand{\cUV}[1]{\coef{w}{#1}}
\newcommand{\cVUmu}[1]{\coef{t}{#1}}
\newcommand{\chypgeom}[1]{\coef{c}{#1}}
\newcommand{\cZZmu}[2]{\coef{z}{#1}^{\rm#2}}
\newcommand{\cscale}[1]{\coef{\alpha}{#1}}
\newcommand{\cmumut}[1]{\coef{b}{#1}}
\newcommand{\normrat}[1]{\coef{a}{#1}}
\newcommand{\normconst}[1]{\coef{\beta}{#1}}
\newcommand{\fun}[3]{#1_{#2}^{#3}}
\newcommand{\funhat}[3]{\fun{\widehat{#1}}{#2}{#3}}
\newcommand{\funbar}[3]{\fun{\overline{#1}}{#2}{#3}}
\newcommand{\harorig}[1]{\funhat{U}{#1}{}}      
\newcommand{\harorigsol}[2]{\fun{U}{#1}{#2}}    
\newcommand{\hargarab}[1]{\funhat{V}{#1}{}}     
\newcommand{\hargarabsol}[2]{\fun{V}{#1}{#2}}   
\newcommand{\monog}[2]{\fun{X}{#1}{#2}}
\newcommand{\antimonog}[2]{\funbar{X}{#1}{#2}}
\newcommand{\ambibasic}[2]{Y_{#1}^{#2}}
\newcommand{\contra}[2]{\fun{Z}{#1}{#2}}
\newcommand{\ambig}[2]{A_{#1}^{#2}}
\newcommand{\angPsi}[2]{\fun{\Psi}{#1}{#2}}     
\begin{document}

\begin{center}
 {\Large\bf Relations among spheroidal\\ and spherical harmonics}

\medskip
\medskip

Raybel Garc{\'i}a-Ancona\\
Jo\~{a}o Morais\\
R. Michael Porter

 \today
\end{center}

\begin{abstract}
  A \textit{contragenic} function in a domain $\Omega\subseteq\R^3$ is
  a reduced-quaternion-valued (i.e\ the last coordinate function is
  zero) harmonic function, which is orthogonal in
  $L_2(\Omega)$ to all monogenic functions and their conjugates.  The
  notion of contragenicity depends on the domain and thus is not a
  local property, in contrast to harmonicity and monogenicity.  For
  spheroidal domains of arbitrary eccentricity, we relate standard
  orthogonal bases of harmonic and contragenic functions for one
  domain to another via computational formulas.  This permits us to 
  show that there exist nontrivial contragenic functions common to the
  spheroids of all eccentricities.
\end{abstract}

\noindent \textbf{Keywords:} spherical harmonics, spheroidal
harmonics, quaternionic analysis, monogenic function, contragenic
function.

\medskip
\noindent \textbf{Classification:} 30G35; 33D50

\section{Introduction}

In certain physical problems in nonspherical domains, it has been
found convenient to replace the classical solid spherical harmonics
with harmonic functions better adapted to the domain in question.  For
example, spheroidal harmonics are used in \cite{Hot} for modeling
potential fields around the surface of the earth.

A systematic analysis of harmonic functions on spheroidal domains was
initiated by Szeg\"o \cite{Szego1935}, followed by Garabedian
\cite{Garabedian} who produced orthogonal bases with respect to
certain natural inner products associated to prolate and oblate
spheroids, among them the $L^2$-Hilbert space structures on the
interior and on the boundary of the spheroid. Some aspects of the
generation of harmonic functions which are orthogonal in the region
exterior to a prolate spheroid were considered in
\cite{MoraisNguyenKou2016} and generalized recently in
\cite{Morais_Habilitation2018}.

The main question which interests us is to relate systems of
harmonic functions associated with the spheroid $\Omega_\mu$ (defined
in \eqref{eq:Omegamu} below) to those associated with the unit ball
$\Omega_0$.
Our starting point is a fundamental formula for spheroidal harmonics
which was worked out in the short but beautiful paper \cite{BBS} and
is discussed thoroughly in Chapter 22 of the monumental text
\cite{Hot}. In classical books such as
\cite{Hobson1931,MorseFeshbach1953,NikiforovUvarov1988}, these
expansions in terms of these bases are used separately without
specifying relations between them.

We complete the above formulas by relating different systems of
harmonic functions associated with spheroids of different
eccentricity. While the manipulation of the coefficients is
essentially algebraic, it must be borne in mind that we are dealing
with continuously varying families of function spaces which are
determined by integration over varying domains.

This study is then extended to include the contragenic functions,
which are those harmonic functions orthogonal to both the
monogenic functions and the antimonogenic functions in the domain
under consideration. In \cite{GMP} a short table of contragenic
polynomials was provided, which included some which did not depend on
the parameter describing the eccentricity of the spheroid.  Such
polynomials are thus contragenic for all spheroids. Our main result,
Theorem \ref{th:intersection}, describes the
intersection of the spaces of contragenic functions.


\section{Background on spheroidal harmonics}
 
As a preliminary to the discussion of monogenic and contragenic
functions on spheroids, we establish the basic facts for harmonics in
this and the next section.  Consider the family of coaxial spheroidal
domains $\Omega_\mu$, scaled so that the major axis is of length 2:
\begin{equation}\label{eq:Omegamu}
  \Omega_\mu =
  \{x \in \R^3 |\  x_0^2 + \frac{x_1^2 + x_2^2}{e^{2\nu}} < 1 \},
\end{equation}
where $\nu\in\R$ and where following the notation in \cite{GMP} the
parameter $\mu=(1-e^{2\nu})^{1/2}$ will be useful in later formulas.
The equations relating the Cartesian coordinates of a point
$x=(x_0,x_1,x_2)$ in  $\Omega_\mu$ to spheroidal coordinates
$(u,v,\phi)$ are
\begin{equation}\label{eq:prolatecoords}
  x_0 = \mu \cos u \cosh v, \ x_1 = \mu \sin u \sinh v \cos \phi, \
  x_2 = \mu \sin u \sinh v \sin \phi,
\end{equation}
where in the case of the prolate spheroid ($\nu<0$) the coordinates
range over $u\in[0,\pi]$, $v\in[0,\arctanh e^\nu]$,
$\phi\in[0,2\pi)$ and the eccentricity is $0<\mu<1$, while for the
oblate spheroid ($\nu>0$) we have $u\,\in\,[0,\pi]$ and
$v\in[0,\arctanh e^\nu]$, $\phi\in[0,2\pi)$ and $\mu$ is imaginary,
$\mu/i>0$.  The spheroids reduce to the unit ball $\Omega_0$ for
$\nu=0$. In many other treatments of spheroidal functions, which
discuss the two (confocal) families separately, the ball is
not represented. See \cite{GMP} for a discussion of this question.

In terms of the coordinates \eqref{eq:prolatecoords}, the {\it solid
  spheroidal harmonics} are
\begin{equation} \label{eq:prolatepreharmonics}
  \harorigsol{n,m}{\pm}[\mu](x) = \harorig{n,m}[\mu](u,v) \, \Phi_m^\pm(\phi),
\end{equation}
where
\begin{equation}
  \Phi_m^+(\phi)=\cos (m\phi), \quad \Phi_m^-(\phi)=\sin (m\phi)
\end{equation} and for $\mu\not=0$,
\begin{equation} \label{eq:prolatepreharmonicsq}
   \harorig{n,m}[\mu](u,v) =  \frac{(n-m)!}{2^n(1/2)_n}\mu^n
  P_{n}^{m}(\cos u) P_{n}^{m}(\cosh v) .
\end{equation}
  Here $P_n^m$ are the associated Legendre functions
of the first kind \cite[Ch. III]{Hobson1931} of degree $n$ and order
$m$,  and
 the (rising) Pochhammer symbol is
$(a)_n=a(a+1)\cdots(a+n-1)$ with $(a)_0=1$ by convention.
To avoid repetition, we state once and for all that
$\harorigsol{n,m}{-}[\mu]$ is only defined for $m\ge1$, i.e.\
$\harorigsol{n,0}{-}[\mu]$ is expressly excluded from all statements
of theorems.

It was shown in \cite{GMP} that with the scale factor which has been
included in \eqref{eq:prolatepreharmonicsq}, the
$\harorigsol{n,m}{\pm}[\mu]$ are polynomials in $(x_0,x_1,x_2)$ which
are normalized so that the limiting case $\mu\to0$ gives the classical
{\it solid spherical harmonics},
\begin{equation*}
\harorigsol{n,m}{\pm}[0](x) = |x|^nP_n^m(x_0/|x|) \Phi_m^\pm(\phi).
\end{equation*}
It is known from \cite{Garabedian} that while the
$\harorigsol{n,m}{\pm}[\mu]$ are mutually orthogonal with respect to
the Dirichlet norm on $\Omega_\mu$, the closely related functions,
which we will call the \textit{Garabedian spheroidal harmonics},
\begin{equation}  \label{eq:prolateharmonics}
  \hargarabsol{n,m}{\pm}[\mu](x) =
   \frac{\partial}{\partial x_0} \harorigsol{n+1,m}{\pm}[\mu](x)
\end{equation}
form an orthogonal basis for $\mathcal{H}_2(\Omega_\mu)$, the linear
subspace of real-valued harmonic functions in $L_2(\Omega_\mu)$. This
property makes the $\hargarabsol{n,m}{\pm}[\mu]$ of greater interest
for many considerations. The corresponding boundary Garabedian
harmonics $\hargarab{n,m}[\mu]$ in $\Omega_\mu$ are characterized by
the relation
\begin{equation}  \label{eq:Vhat}
  \hargarabsol{n,m}{\pm}[\mu](x) =  \hargarab{n,m}[\mu](u,v) \,\Phi_m^\pm(\phi).
\end{equation}
We recall \cite{MoraisGuerlebeck2012}
that for spherical harmonics, there is a   formula analogous
to Appell differentiation of monomials,
\begin{align}  \label{eq:V[0]}
   \frac{\partial}{\partial x_0} \harorigsol{n+1,m}{\pm}[0](x)
 =  (n+m+1) \harorigsol{n,m}{\pm}[0](x).
\end{align}
However, $\hargarabsol{n,m}{\pm}[\mu]$ is not so simply related to
$\harorigsol{n,m}{\pm}[\mu]$ for $\mu\not=0$, as was explained in
\cite{Morais4}. We examine such relations in the next section.


\section{Conversions among orthogonal spheroidal
harmonics and spherical  harmonics}

\subsection{Garabedian harmonics expressed by classical harmonics}
 
As mentioned in the Introduction, it is of interest to express the
orthogonal basis of harmonic functions for one spheroid $\Omega_\mu$
in terms of those for another spheroid. It is natural to use the
unit ball $\Omega_0$ as a point of reference, which will be the case in
the first results. We begin the calculation of the coefficients for
the relationships among the various classes of harmonic functions by
presenting various known formulas in a uniform manner.  For $n\ge0$,
consider the rational constants
\begin{equation} \label{eq:cUU}
\cUU{n,m,k} =  \frac{ (1/2)_{n-k}\, (n+m-2k+1)_{2k}}{ (-4)^k (1/2)_{n}\, k!  }
\end{equation}
for $0\le m\le n$, $0\le2k\le n$, and let $\cUU{n,m,k}=0$
otherwise. In the present notation, the main result of \cite{BBS} may
be expressed as follows (i.e.\ the factor
$\cscale{m,n} = (n-m)!/(2n-1)!!$ has been incorporated into
\eqref{eq:cUU}).
\begin{prop}[\cite{BBS}] \label{prop:BBS}
Let $n\ge0$ and $0\le m\le n$. Then
\begin{equation*}
  \harorig{n,m}[\mu] = \sum_{0\le2k\le n-m} \cUU{n,m,k} \mu^{2k}\,
    \harorig{n-2k,m}[0].
\end{equation*}
\end{prop}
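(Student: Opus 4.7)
The proposition is credited to \cite{BBS}, so the proof has two components: a structural argument for why the expansion must take the stated form, and a verification that the coefficients agree with \eqref{eq:cUU} under the present normalization.

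For the structural part, I would use that $\harorigsol{n,m}{\pm}[\mu]$ is a harmonic polynomial of degree $n$ in $(x_0,x_1,x_2)$ (as recalled just after \eqref{eq:prolatepreharmonicsq}). Substituting the change of coordinates \eqref{eq:prolatecoords} into $\mu^n P_n^m(\cos u)\, P_n^m(\cosh v)$ and using $\cosh^2 v - \sinh^2 v = 1$ together with $\cos^2 u + \sin^2 u = 1$, I would convert the product into a polynomial in $x_0$, $r^2 = x_1^2+x_2^2$, and $\mu^2$. Collecting powers of $\mu^{2k}$, each coefficient is a harmonic polynomial of degree $n-2k$ carrying the azimuthal factor $\Phi_m^\pm(\phi)$, hence a scalar multiple of $\harorig{n-2k,m}[0]$ by the standard classification of solid spherical harmonics of fixed order. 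This establishes that the expansion has the required form, with rational coefficients still to be determined, and in particular that the sum is finite and terminates as soon as $n-2k<m$.

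To pin down the coefficients, I would fix a convenient monomial---say $x_0^{n-m-2k}r^m\Phi_m^\pm(\phi)$---and compare its occurrence on both sides. On the right, only the term with index $k$ contributes, via the known leading coefficient of $P_{n-2k}^m$; on the left, the Rodrigues formula applied to $P_n^m(\cos u)$ and $P_n^m(\cosh v)$ produces a finite double sum. Matching these two expressions gives an explicit---but unwieldy---formula for the coefficient of $\mu^{2k}$.

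The main obstacle is collapsing that explicit sum into the compact Pochhammer expression \eqref{eq:cUU}. After rewriting factorials as $(1/2)_j$ symbols, this reduces to a Chu--Vandermonde identity, which is precisely the summation carried out in \cite{BBS}. Granting that identity, the remaining task is purely a normalization check: one verifies that the factor $(n-m)!/(2^n(1/2)_n)$ appearing in \eqref{eq:prolatepreharmonicsq}, combined with the factor $(n-m-2k)!/(2^{n-2k}(1/2)_{n-2k})$ appearing in $\harorig{n-2k,m}[0]$ and the rescaled BBS constant $\cscale{m,n}=(n-m)!/(2n-1)!!$, collapses to $\cUU{n,m,k}=(1/2)_{n-k}(n+m-2k+1)_{2k}/((-4)^k(1/2)_n\, k!)$. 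This is a routine Pochhammer manipulation using $2^n(1/2)_n=(2n-1)!!$ and the identity $(n-m)!/(n-m-2k)!=(n+m-2k+1)_{2k}\cdot(\text{parity factors})$; the sign $(-4)^k$ in the denominator originates from the $(-1)^j$ in the Rodrigues expansion combined with the powers of $2$ collected from the two Legendre factors.
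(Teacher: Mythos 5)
The paper itself contains no proof of Proposition \ref{prop:BBS}: the result is imported from \cite{BBS}, the only new content being the remark that the factor $\cscale{m,n}=(n-m)!/(2n-1)!!$ has been absorbed into the definition \eqref{eq:cUU}. So your proposal cannot be compared against an in-paper argument; it has to be judged as a reconstruction of \cite{BBS}, and as such its architecture is sound. The structural half is correct and becomes complete once you make explicit two facts you use tacitly: each $\mu^{2k}$-coefficient is harmonic because applying $\Delta_x$ to the identity kills every power of $\mu$ separately (the identity holds for a continuum of $\mu$), and it is homogeneous of degree $n-2k$ because by \eqref{eq:prolatecoords} and \eqref{eq:prolatepreharmonicsq} the polynomial $\harorigsol{n,m}{\pm}[\mu](x)$ is \emph{jointly} homogeneous of degree $n$ in $(x,\mu)$. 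One-dimensionality of the space of homogeneous harmonics of fixed degree and fixed azimuthal order $\Phi_m^\pm$ then yields the stated form and the termination at $n-2k<m$, exactly as you say.

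Two local inaccuracies in your coefficient-matching half would bite if the plan were executed literally. First, for the monomial $x_0^{n-m-2k}r^m\Phi_m^\pm$ it is true (by homogeneity in $x$) that only the index-$k$ term on the right contributes, but the relevant constant is not the leading coefficient of $P_{n-2k}^m$: writing $P_d^m(t)=(1-t^2)^{m/2}Q_d(t)$, the solid harmonic is $r^m\sum_j q_j\, x_0^{\,d-m-2j}(x_0^2+r^2)^j\,\Phi_m^\pm$, so expanding $(x_0^2+r^2)^j$ feeds \emph{every} coefficient $q_j$ into that monomial; the constant is $Q_d(1)=\tfrac{d^m}{dt^m}P_d(t)\big|_{t=1}=(d+m)!/\bigl(2^m m!\,(d-m)!\bigr)$ with $d=n-2k$, which is nonzero, so the matching still determines the coefficient uniquely. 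Second, the normalization identity you quote is garbled: $(n-m)!/(n-m-2k)!=(n-m-2k+1)_{2k}$, whereas $(n+m-2k+1)_{2k}=(n+m)!/(n+m-2k)!$ comes from a different ratio, so the bookkeeping that produces $(n+m-2k+1)_{2k}$ in \eqref{eq:cUU} cannot be the one you describe. Finally, note that the single genuinely nontrivial step---collapsing the Rodrigues double sum to the closed form \eqref{eq:cUU}---is not carried out but ``granted'' from \cite{BBS}; since the paper does exactly the same, this leaves your proposal a correct and well-organized outline rather than a self-contained proof.
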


An important characteristic of this relation is that the same
coefficients $\cUU{n,m,k}$ work for the ``$+$'' and ``$-$'' cases
(cosines and sines) and, strikingly, for all values of $\mu$.
By \eqref{eq:prolatepreharmonics}, an
equivalent form of expressing Proposition \ref{prop:BBS} is
\begin{equation} \label{eq:UqfromUq}
 \harorigsol{n,m}{\pm}[\mu] = \sum_{0\le2k\le n-m}
     \cUU{n,m,k} \mu^{2k}\, \harorigsol{n-2k,m}{\pm}[0].
\end{equation}

Since $\partial/\partial x_0$ in \eqref{eq:prolateharmonics} is a
linear operator, \eqref{eq:UqfromUq} gives automatically the
corresponding result for the Garabedian harmonics,
 \begin{align} \label{eq:VV}
 \hargarabsol{n,m}{\pm}[\mu] &=
   \sum_{0\le2k\le n-m+1} \cVV{n,m,k} \mu^{2k} \,\hargarabsol{n-2k,m}{\pm}[0],   
\end{align}
where $\cVV{n,m,k}=\cUU{n+1,m,k}$.
This in turn gives via \eqref {eq:V[0]} the following expression in
terms of the spherical harmonics:
\begin{coro} \label{coro:VfromU}
Let $n\ge0$ and $0\le m\le n$. Then
\begin{align*}
  \hargarab{n,m}[\mu]  
   &= \sum_{0\le2k\le n-m+1} \cUV{n,m,k}  \mu^{2k}\, \harorig{n-2k,m}[0] ,
\end{align*}
where \[\cUV{n,m,k}=(n+m-2k+1)\cVV{n,m,k}. \]
 \end{coro}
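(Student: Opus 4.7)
The plan is a direct two-step substitution that chains identities already established in the excerpt. First I would start from \eqref{eq:VV}, which expands the solid Garabedian spheroidal harmonic $\hargarabsol{n,m}{\pm}[\mu]$ as $\sum_k \cVV{n,m,k}\,\mu^{2k}\,\hargarabsol{n-2k,m}{\pm}[0]$. Second, I would apply the spherical Appell-type identity \eqref{eq:V[0]}, used in conjunction with the defining relation \eqref{eq:prolateharmonics} at $\mu=0$ and with index $n-2k$ in place of $n$, to obtain
\begin{equation*}
\hargarabsol{n-2k,m}{\pm}[0]
 = \frac{\partial}{\partial x_0}\,\harorigsol{n-2k+1,m}{\pm}[0]
 = (n-2k+m+1)\,\harorigsol{n-2k,m}{\pm}[0].
\end{equation*}
Substituting this into \eqref{eq:VV} expresses $\hargarabsol{n,m}{\pm}[\mu]$ as a linear combination of the solid spherical harmonics $\harorigsol{n-2k,m}{\pm}[0]$ with coefficients $(n-2k+m+1)\,\cVV{n,m,k}\,\mu^{2k}$, which are precisely the $\cUV{n,m,k}\,\mu^{2k}$ claimed.

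To reach the form stated in the corollary, it remains to factor off the angular dependence $\Phi_m^\pm(\phi)$. By \eqref{eq:Vhat} the left-hand side equals $\hargarab{n,m}[\mu](u,v)\,\Phi_m^\pm(\phi)$, and by \eqref{eq:prolatepreharmonics} each summand on the right equals $\cUV{n,m,k}\,\mu^{2k}\,\harorig{n-2k,m}[0](u,v)\,\Phi_m^\pm(\phi)$. Since both sides factor through the same nontrivial function of $\phi$, the identity passes to the corresponding $(u,v)$-parts, yielding exactly the stated formula.

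The only bookkeeping worth a sentence concerns the summation range $0\le 2k\le n-m+1$ inherited from \eqref{eq:VV}. When $n-m$ is even no integer $2k$ reaches this upper bound, while when $n-m$ is odd the extreme term has $n-2k=m-1$, for which $\harorig{m-1,m}[0]=0$ since $P_{m-1}^m\equiv 0$; hence including the upper endpoint introduces no inconsistency. I do not expect any genuine obstacle: the corollary is a routine consequence of combining \eqref{eq:VV} with the Appell-type identity available on the ball.
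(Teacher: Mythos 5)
Your proof is correct and follows exactly the paper's own route: the paper derives the corollary by applying the Appell-type identity \eqref{eq:V[0]} to each term of \eqref{eq:VV}, just as you do. Your extra bookkeeping on the endpoint $2k=n-m+1$ (where $P_{m-1}^m\equiv0$ makes the term vanish) is a detail the paper leaves implicit, and it is handled correctly.
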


The coefficients
\begin{align*}    
 \cVUmu{n,m,k} 
    &= \frac{ (n+m+1)! \,(1/2)_{n-2k+1}}{4^k(n+m-2k)!(1/2)_{n+1}}
\end{align*}
 give a similar expression for the Garabedian
basic harmonics $\hargarabsol{n,m}{\pm}[\mu]$ in terms of the standard
harmonics $\harorigsol{n,m}{\pm}[\mu]$ for the same spheroid, rather than
in terms of $\harorig{n,m}[0]$:

\begin{theo} [\cite{Morais4}] \label{th:VfromU}
Let $n\ge0$ and $0\le m\le n$. Then
\begin{equation*}
\DS   \hargarab{n,m}[\mu] = \sum_{0\le 2k\le n-m}
     \cVUmu{n,m,k} \mu^{2k}\, \harorig{n-2k,m}[\mu].
\end{equation*}
\end{theo}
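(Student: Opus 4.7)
The plan is to compare two expansions of $\hargarab{n,m}[\mu]$ in terms of the linearly independent spherical harmonics $\{\harorig{j,m}[0]\}_{j\ge0}$, which are polynomials of distinct degrees with known leading terms.

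First I would substitute Proposition \ref{prop:BBS} into the proposed right-hand side. Expanding each $\harorig{n-2k,m}[\mu]$ as a sum of spherical harmonics and collecting terms by the total power $\mu^{2\ell}$ (with $\ell=k+j$), I obtain
\begin{equation*}
\sum_{k}\cVUmu{n,m,k}\mu^{2k}\,\harorig{n-2k,m}[\mu] = \sum_{\ell}\mu^{2\ell}\,\harorig{n-2\ell,m}[0]\sum_{k=0}^{\ell}\cVUmu{n,m,k}\,\cUU{n-2k,m,\ell-k}.
\end{equation*}
On the other hand, Corollary \ref{coro:VfromU} independently gives $\hargarab{n,m}[\mu] = \sum_{\ell}\cUV{n,m,\ell}\mu^{2\ell}\,\harorig{n-2\ell,m}[0]$, so by linear independence the theorem reduces to the Pochhammer identity
\begin{equation*}
\sum_{k=0}^{\ell}\cVUmu{n,m,k}\,\cUU{n-2k,m,\ell-k} = \cUV{n,m,\ell}
\end{equation*}
for each $\ell$ with $0\le 2\ell\le n-m+1$.

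Second, I would verify this identity by writing out the three coefficients and pulling out the factors independent of $k$. After using $(a)_{r+s}=(a)_r(a+r)_s$ to re-express $(1/2)_{n-2k+1}$, $(1/2)_{n-k-\ell}$, and $(n+m-2k)!$ uniformly in the summation index, the inner sum should present itself as a terminating hypergeometric sum whose alternating sign comes from $(-4)^{\ell-k}$; I expect it to collapse by a Chu–Vandermonde summation to the closed form $\cUV{n,m,\ell}$. The main obstacle is the bookkeeping: the $(1/2)$-Pochhammer factors couple both indices in a way that must be unwound delicately before the hypergeometric structure becomes visible.

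If the direct identity proves too cumbersome, an alternative is to first invert Proposition \ref{prop:BBS}. The triangular system expressing $\harorig{n,m}[\mu]$ in terms of $\{\harorig{n-2k,m}[0]\}$ admits an explicit inversion giving $\harorig{n,m}[0]$ as a sum over $\{\harorig{n-2j,m}[\mu]\}$, and substituting this inversion into Corollary \ref{coro:VfromU} produces the claimed expansion directly, transferring the combinatorial difficulty to the (similarly structured, but more symmetric) inversion identity.
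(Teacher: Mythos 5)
Your proposal is correct, and it is necessarily a different route from the paper's, because the paper gives no proof of Theorem \ref{th:VfromU} at all: the result is imported from \cite{Morais4}, where it is obtained directly from Legendre-function recurrences applied to the factored form \eqref{eq:prolatepreharmonicsq}. What you propose --- composing Proposition \ref{prop:BBS} with Corollary \ref{coro:VfromU} and matching coefficients of the linearly independent $\harorig{n-2\ell,m}[0]$ --- is exactly the coefficient-composition technique the paper itself uses to prove Theorem \ref{th:cVVmu} (compare your inner sum with \eqref{eq:sumw}), so your argument has the virtue of making the cited theorem self-contained within the paper's own formulas. The one correction is to the mechanism: your inner sum does collapse, but not by Chu--Vandermonde, since the summand is independent of $\ell$ (no terminating factor $(-\ell)_j$ ever appears; $\ell$ enters only as the summation limit), so what you have is a telescoping partial sum rather than a terminating ${}_2F_1$ at unit argument. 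Explicitly, using $(n+m-2\ell+1)_{2\ell-2k}=(n+m-2k)!/(n+m-2\ell)!$ and $(1/2)_{n-2k+1}/(1/2)_{n-2k}=n-2k+\tfrac{1}{2}$, and substituting $j=\ell-k$, one finds
\begin{equation*}
  \sum_{k=0}^{\ell}\cVUmu{n,m,k}\,\cUU{n-2k,m,\ell-k}
  = \frac{(n+m+1)!\,(1/2)_{n-2\ell}}{4^{\ell}\,(1/2)_{n+1}\,(n+m-2\ell)!}
    \sum_{j=0}^{\ell}\frac{(-1)^{j}(a+2j)\,(a)_{j}}{j!},
  \qquad a=n-2\ell+\tfrac{1}{2},
\end{equation*}
and since $(a+2j)(a)_{j}=(a)_{j+1}+j\,(a)_{j}$, the sum telescopes to $(-1)^{\ell}(a)_{\ell+1}/\ell!$; with $(1/2)_{n-2\ell}\,(a)_{\ell+1}=(1/2)_{n-\ell+1}$ and $(n+m+1)!/(n+m-2\ell)!=(n+m-2\ell+1)_{2\ell+1}$ this is precisely $\cUV{n,m,\ell}$ in the form displayed in the proof of Theorem \ref{th:cVVmu}, as required. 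Two bookkeeping remarks: the matching is needed (and your double sum only produces terms) for $0\le2\ell\le n-m$, since $2j\le n-2k-m$ forces $2(k+j)\le n-m$; the possible extra term $2\ell=n-m+1$ in Corollary \ref{coro:VfromU} is harmless because it multiplies $\harorig{m-1,m}[0]\equiv0$. Finally, in your fallback route the ``explicit inversion'' of Proposition \ref{prop:BBS} need not be derived: it is already stated as \eqref{eq:UbacktoU} with coefficients \eqref{eq:UbacktoUcoef}, and substituting it into Corollary \ref{coro:VfromU} gives a second complete proof whose inner sum is structurally the composition \eqref{eq:sumw} at unit argument --- that is where a genuine Chu--Vandermonde evaluation does appear.
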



 In \cite{BBS} the inverse relation of \eqref{eq:UqfromUq} was also
derived, expressing $\harorigsol{n,m}{\pm}[0]$ in terms of
$\harorigsol{n,m}{\pm}[\mu]$, via
\begin{equation}  \label{eq:UbacktoU}
 \harorig{n,m}[0] = \sum_{0\le k\le n-m}
     \cUU{n,m,k}^0 \mu^{2k} \, \harorig{n-2k,m}[\mu],
\end{equation}
where the coefficients can be written as
\begin{equation}  \label{eq:UbacktoUcoef}
  \cUU{n,m,k}^0 =
 \frac{ 4^{n-2k}(2n-4k+1)(n-k)!(m+n)!(1/2)_{n-2k}}
      { k!(2n-2k+1)!(n+m-2k)!},
\end{equation}
again independent of $\mu$.  In consequence, applying the operator
$\partial/\partial x_0$ and using \eqref{eq:V[0]}, we have the
following result.
\begin{prop}\label{prop:UfromV}
Let $n\geq0$ and $0\leq m\leq n$. Then
\[   \harorig{n,m}[0] =
    \sum_{0\leq2k\leq n-m}\cUV{n,m,k}^0\mu^{2k} \,\hargarab{n-2k,m}[\mu],
\]
where
\begin{equation*}
 \cUV{n,m,k}^0 = \dfrac{\cUU{n+1,m,k}^0}{n+m+1}.
\end{equation*}
\end{prop}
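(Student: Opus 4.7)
The plan is to derive Proposition \ref{prop:UfromV} from \eqref{eq:UbacktoU} exactly as Corollary \ref{coro:VfromU} was derived from Proposition \ref{prop:BBS}: by applying the operator $\partial/\partial x_0$ and invoking the Appell-type relation \eqref{eq:V[0]} together with the definition \eqref{eq:prolateharmonics} of the Garabedian harmonics.

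Concretely, I would multiply \eqref{eq:UbacktoU} by $\Phi_m^\pm(\phi)$ and shift the index $n\mapsto n+1$, producing the solid form
\[
\harorigsol{n+1,m}{\pm}[0] \;=\; \sum_{k\ge 0}\cUU{n+1,m,k}^0\,\mu^{2k}\,\harorigsol{n-2k+1,m}{\pm}[\mu].
\]
Differentiating both sides with respect to $x_0$ transforms the left-hand side into $(n+m+1)\,\harorigsol{n,m}{\pm}[0]$ by \eqref{eq:V[0]}, while the right-hand side becomes $\sum_k \cUU{n+1,m,k}^0\,\mu^{2k}\,\hargarabsol{n-2k,m}{\pm}[\mu]$ by the very definition \eqref{eq:prolateharmonics}. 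Dividing by $n+m+1$ produces the announced coefficient $\cUV{n,m,k}^0 = \cUU{n+1,m,k}^0/(n+m+1)$. Finally, factoring out the angular factor $\Phi_m^\pm(\phi)$ via \eqref{eq:prolatepreharmonics} and \eqref{eq:Vhat} recovers the stated identity for $\harorig{n,m}[0]$ and $\hargarab{n-2k,m}[\mu]$.

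The only bookkeeping point is confirming that the summation truncates to $0\le 2k\le n-m$. The input sum runs up to $k=n+1-m$; for $2k>n-m$ the summand vanishes because $\hargarab{n-2k,m}[\mu]=0$. Indeed, when $n-2k+1<m$ the polynomial $\harorigsol{n-2k+1,m}{\pm}[\mu]$ is already zero, and in the borderline case $n-2k+1=m$ the highest zonal harmonic $\harorigsol{m,m}{\pm}[\mu]$ coincides (by \eqref{eq:UqfromUq} at $n=m$) with $\harorigsol{m,m}{\pm}[0]$, which depends only on $x_1,x_2$, so its $x_0$-derivative also vanishes. No genuine obstacle arises; the argument is a direct algebraic consequence of the three cited formulas, and the only step requiring care is the verification of this tail-vanishing.
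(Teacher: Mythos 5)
Your proposal is correct and follows essentially the same route as the paper, which obtains Proposition \ref{prop:UfromV} precisely by applying $\partial/\partial x_0$ to the shifted form of \eqref{eq:UbacktoU} and invoking \eqref{eq:V[0]} together with \eqref{eq:prolateharmonics}. Your explicit verification that the tail terms with $2k>n-m$ vanish (including the borderline case $n-2k+1=m$, where $\harorigsol{m,m}{\pm}[\mu]=\harorigsol{m,m}{\pm}[0]$ is independent of $x_0$) is a detail the paper leaves implicit, and it is handled correctly.
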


 The inverse relation for Theorem \ref{th:VfromU} is a much simpler
formula, given as follows:
 
\begin{coro} [\cite{Morais4}]
For  $n\geq0$ and $0\leq m \leq n$,
\[  \harorig{n,m}[\mu] = \frac{1}{n+m+1} \hargarab{n,m}[\mu]
      + \frac{n+m}{4n^2-1}  \mu^2  \,\hargarab{n-2,m}[\mu].
\]
\end{coro}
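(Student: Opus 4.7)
The plan is to verify the identity by direct computation: substitute the expansion from Theorem~\ref{th:VfromU} into the right-hand side and check that, when everything is expressed in the basis $\{\mu^{2j}\harorig{n-2j,m}[\mu]\}_{j\ge 0}$, only the coefficient of $\harorig{n,m}[\mu]$ survives (with value~$1$). Since Theorem~\ref{th:VfromU} presents each $\hargarab{n,m}[\mu]$ as a finite triangular combination of the $\harorig{n-2k,m}[\mu]$, this substitution produces a finite double sum that can be handled by direct algebraic manipulation.

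First I would apply Theorem~\ref{th:VfromU} both to $\hargarab{n,m}[\mu]$ and to $\hargarab{n-2,m}[\mu]$ on the right-hand side. Reindexing the resulting second sum by $j=k+1$ puts it on the same footing as the first: both are now sums over $\mu^{2j}\harorig{n-2j,m}[\mu]$. The $j=0$ coefficient is supplied only by the first expansion and reduces to $\cVUmu{n,m,0}/(n+m+1)$, which equals $1$ because $\cVUmu{n,m,0}=n+m+1$; this accounts exactly for the $\harorig{n,m}[\mu]$ summand on the left.

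For $j\ge 1$ the coefficient of $\mu^{2j}\harorig{n-2j,m}[\mu]$ is a combination of $\cVUmu{n,m,j}/(n+m+1)$ and $\frac{n+m}{4n^2-1}\,\cVUmu{n-2,m,j-1}$. The key algebraic identity is the Pochhammer shift
\[
(1/2)_{n+1} \;=\; \frac{(2n-1)(2n+1)}{4}\,(1/2)_{n-1} \;=\; \frac{4n^2-1}{4}\,(1/2)_{n-1},
\]
which exactly converts the $(1/2)_{n+1}$ in the denominator of $\cVUmu{n,m,j}$ into the $(1/2)_{n-1}$ of $\cVUmu{n-2,m,j-1}$ and produces the $4n^2-1$ appearing in the statement. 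Together with the factorial rearrangement $(n+m+1)!/(n+m-1)! = (n+m+1)(n+m)$, this identifies the two summands up to a common factor, so they combine (with the appropriate signs forced by Theorem~\ref{th:VfromU}) to cancel for every $j\ge 1$, leaving only the term $\harorig{n,m}[\mu]$.

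The main obstacle is purely the bookkeeping with Pochhammer and factorial factors; no new analytic input is required beyond Theorem~\ref{th:VfromU}. The conceptual point worth highlighting is that the special quadratic growth of $\cVUmu{n,m,k}$ in $n$ is precisely what causes the otherwise infinite triangular inverse to collapse to just two nonzero terms, which is what makes this corollary a useful companion to Theorem~\ref{th:VfromU}.
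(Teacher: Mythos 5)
Your overall strategy---substitute Theorem~\ref{th:VfromU} into the right-hand side, reindex the second sum by $j=k+1$, and compare coefficients of $\mu^{2j}\harorig{n-2j,m}[\mu]$---is a reasonable one; indeed the paper offers no proof at all (the corollary is simply quoted from \cite{Morais4}), so a direct verification is the natural route, and two of your computational pillars are correct. One checks that $\cVUmu{n,m,0}=n+m+1$, which settles the $j=0$ coefficient, and your Pochhammer shift $(1/2)_{n+1}=\frac{4n^2-1}{4}(1/2)_{n-1}$, together with $(n+m)!/(n+m-1)!=n+m$ and the fact that the factors $(1/2)_{n-2j+1}$ and $(n+m-2j)!$ are literally the same in both summands, yields exactly
\begin{equation*}
  \frac{\cVUmu{n,m,j}}{n+m+1} \;=\; \frac{n+m}{4n^2-1}\,\cVUmu{n-2,m,j-1},
  \qquad 1\le j,\ 2j\le n-m,
\end{equation*}
with the two index ranges coinciding precisely, so there are no orphan terms; the boundary cases $m\in\{n-1,n\}$ are covered by the convention that $\hargarab{n-2,m}[\mu]$ vanishes out of range.

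Here is the genuine gap: the displayed identity says the two contributions to the coefficient of $\mu^{2j}\harorig{n-2j,m}[\mu]$ are \emph{equal}, whereas your argument needs them to be \emph{opposite}, and your parenthetical ``with the appropriate signs forced by Theorem~\ref{th:VfromU}'' is where the proof breaks down: Theorem~\ref{th:VfromU} forces no signs, since every $\cVUmu{n,m,k}=\frac{(n+m+1)!\,(1/2)_{n-2k+1}}{4^k(n+m-2k)!\,(1/2)_{n+1}}$ as printed is strictly positive. Consequently, with the ``$+$'' in the statement, the coefficient for $j\ge1$ comes out to $2\,\frac{n+m}{4n^2-1}\,\cVUmu{n-2,m,j-1}\neq0$; carried out honestly, your computation refutes the printed formula rather than proving it. In fact the identity holds with a minus sign under the paper's conventions: for $(n,m)=(2,0)$ the definitions give $\harorig{2,0}[\mu]=x_0^2-\frac{x_1^2+x_2^2}{2}-\frac{\mu^2}{3}$ and $\hargarab{2,0}[\mu]=3x_0^2-\frac{3}{2}(x_1^2+x_2^2)-\frac{3}{5}\mu^2$, which confirms Theorem~\ref{th:VfromU} in the form $\hargarab{2,0}[\mu]=3\harorig{2,0}[\mu]+\frac{2}{5}\mu^2\harorig{0,0}[\mu]$ but inverts to $\harorig{2,0}[\mu]=\frac{1}{3}\hargarab{2,0}[\mu]-\frac{2}{15}\mu^2\hargarab{0,0}[\mu]$. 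So either the corollary should read $-\frac{n+m}{4n^2-1}\mu^2$, or $\cVUmu{n,m,k}$ was meant to carry $(-4)^k$; since the printed $\cVUmu{n,m,k}$ is the internally consistent choice (it checks against Proposition~\ref{prop:BBS} in low degrees, including $k=2$), the sign in the corollary is the misprint. Your proof becomes complete and correct once you replace the vague appeal to signs by the explicit ratio identity above and state that the cancellation, and hence the corollary, requires the minus sign.
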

This uses the convention $\hargarab{n-2,m}[\mu]=0$ when $m>n$;
i.e.
\begin{align*}
  \harorig{n,n-1}[\mu] &= \frac{1}{2n} \hargarab{n,n-1}[\mu],\\
  \harorig{n,n}[\mu] &= \frac{1}{2n+1} \hargarab{n,n}[\mu].
\end{align*}


\subsection{Conversion  among Garabedian harmonics}

The preceding subsection does not include the inverse relation of
\eqref{eq:VV} of the form
\begin{equation}  \label{eq:VVinv} 
  \hargarab{n,m}[0]  = \sum_{0\leq2k\leq n-m} \cVV{n,m,k}^0 \mu^{2k}\hargarab{n-2k,m}[\mu].
\end{equation}
Instead of deriving it directly, we verify first the following
remarkable conversion formula, which relates the spheroidal harmonics
associated with $\Omega_{\mu}$ to those associated with any other
$\Omega_{\mut}$.  Write
\begin{equation*}
\cmumut{n,m,k}=
   \dfrac{(n+m+1)!(1/2)_{n-2k+2}}{4^{k}k!(n+m-2k+1)! (1/2)_{n-k+2}}
\end{equation*}
when $0 \leq 2k \leq n-m+2$,  otherwise $\cmumut{n,m,k}=0$.
  
\begin{theo}\label{th:cVVmu}
  Let $n\geq0$, $0\leq m\leq n$, and let
  $\mu,\mut \in [0,1) \cup i\R^{+}$ such that $\mu\neq0$. The
   coefficients $\cVVmu{n,m,k}[\mut,\mu]$ in the
  relation
\[ \hargarab{n,m}[\mut] = \sum_{0\leq2k\leq n-m}
    \cVVmu{n,m,k}[\mut,\mu]\,\hargarab{n-2k,m}[\mu]
\]
are given by
\[  \cVVmu{n,m,k}[\mut,\mu] =  {}_2F_1(-k,-n+k-3/2;-n-1/2;(\mut/\mu)^2)
   \, \cmumut{n,m,k}\,\mu^{2k},
\]
with $_2F_1$ denoting the classical Gaussian hypergeometric function.
\end{theo}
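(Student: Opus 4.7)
My plan is to derive Theorem \ref{th:cVVmu} by chaining two formulas already established in the excerpt. Corollary \ref{coro:VfromU} expresses $\hargarab{n,m}[\mut]$ as a finite combination of classical spherical harmonics $\harorig{\cdot,m}[0]$ with $\mut$-dependent coefficients, and Proposition \ref{prop:UfromV} inverts this for the parameter $\mu$, writing each $\harorig{\cdot,m}[0]$ in terms of the target basis $\hargarab{\cdot,m}[\mu]$. Composing these substitutions expresses $\hargarab{n,m}[\mut]$ as a double sum indexed by $j$ (from the outer expansion) and $i$ (from the inner one). After reindexing with $k=i+j$ and collecting terms, the coefficient of $\hargarab{n-2k,m}[\mu]$ becomes
\begin{equation*}
\cVVmu{n,m,k}[\mut,\mu] \;=\; \mu^{2k}\sum_{j=0}^{k}\cUV{n,m,j}\,\cUV{n-2j,m,k-j}^{0}\left(\frac{\mut}{\mu}\right)^{2j}.
\end{equation*}
The theorem is then equivalent to the assertion that this polynomial in $(\mut/\mu)^2$ equals $\cmumut{n,m,k}$ times the terminating Gaussian series for $\,{}_2F_1(-k,-n+k-3/2;-n-1/2;(\mut/\mu)^2)$.

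The central computation is the term-by-term verification of the identity
\begin{equation*}
\frac{\cUV{n,m,j}\,\cUV{n-2j,m,k-j}^{0}}{\cmumut{n,m,k}}
  \;=\; \frac{(-k)_j\,(-n+k-3/2)_j}{(-n-1/2)_j\,j!},\qquad 0\le j\le k.
\end{equation*}
To carry this out I would unfold the left-hand side using $\cUV{n,m,j}=(n+m-2j+1)\cUU{n+1,m,j}$ and $\cUV{n-2j,m,k-j}^{0}=\cUU{n-2j+1,m,k-j}^{0}/(n-m-2j+1)$ together with the closed forms \eqref{eq:cUU} and \eqref{eq:UbacktoUcoef}, and then divide by the explicit $\cmumut{n,m,k}$. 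The resulting product of integer factorials and half-integer Pochhammer symbols can be collapsed by standard manipulations: the reflection $(a)_j=(-1)^j(1-a-j)_j$ converts ratios of the form $(n+m+1)!/(n+m-2j+1)!$ and $(n-k+2)/(n-2j+2)$-type factors into $(-k)_j$ and $(-n+k-3/2)_j$, while the duplication identity $(1/2)_n=(2n)!/(4^n n!)$ turns the half-integer Pochhammers arising from $\cmumut{n,m,k}$ and from $\cUU{n+1,m,k}^{0}$ into $(-n-1/2)_j$.

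The main obstacle is precisely this bookkeeping: the roles of $\mut$ and $\mu$ are asymmetric (only $\mut$ enters the outer expansion and only $\mu$ the inverse), so the coefficients $\cUV{n,m,j}$ and $\cUV{n-2j,m,k-j}^0$ have quite different analytic shapes and cancellations do not occur in an obvious way. I would guide the simplification by first checking $k=0$, where the hypergeometric factor reduces to $1$ and the identity collapses to $\cUV{n,m,0}\cUV{n,m,0}^{0}=1=\cmumut{n,m,0}$, and then $k=1$, where it reduces to $1-(\mut/\mu)^{2}$; these small cases both confirm the target form and suggest the correct grouping of Pochhammer factors for the general induction on $j$.
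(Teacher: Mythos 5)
Your proposal is correct and coincides with the paper's own proof: the paper likewise substitutes Proposition \ref{prop:UfromV} into Corollary \ref{coro:VfromU} (with $\mut$ in place of $\mu$), reads off $\cVVmu{n,m,k}[\mut,\mu]=\mu^{2k}\sum_{l=0}^{k}\cUV{n,m,l}\,\cUV{n-2l,m,k-l}^0\,(\mut/\mu)^{2l}$ by linear independence of the harmonic basis, and then verifies term-by-term that $\cUV{n,m,l}\,\cUV{n-2l,m,k-l}^0/\cmumut{n,m,k}$ equals the hypergeometric coefficient $(-k)_l(-n+k-3/2)_l/\bigl(l!\,(-n-1/2)_l\bigr)$, exactly the identity you isolate. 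The only blemish is a typo in your unfolding step: by Proposition \ref{prop:UfromV} the denominator in $\cUV{n-2j,m,k-j}^0=\cUU{n-2j+1,m,k-j}^0/(n+m-2j+1)$ should be $(n+m-2j+1)$, not $(n-m-2j+1)$.
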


\begin{proof}
  We begin by replacing $\mu$ with $\mut$ in Corollary
  \ref{coro:VfromU} and substituting the terms on the right-hand side
  according to Proposition \ref{prop:UfromV}.  By linear independence
  of the harmonic basis elements, it follows that
\begin{equation}\label{eq:sumw}
  \cVVmu{n,m,k}[\mut,\mu] = \mu^{2k}\sum_{l=0}^{k}\cUV{n,m,l}\cUV{n-2l,m,k-l}^0  
 \left(\frac{\mut}{\mu}\right)^{2l}
\end{equation}
in which we note that all terms are real valued.  Using reductions such as
$(2n-4k+3)(1/2)_{n-2k+1}=2(1/2)_{n-2k+2}$ and recalling $0\le l\le k$,
one easily sees that
\begin{align*}
  \cUV{n,m,l} &= \frac{ (1/2)_{n-l+1}(n+m-2l+1)_{2l+1}} {(-4^l)l!(1/2)_{n+1}}, \\
   \cUV{n-2l,m,k-l}^0 &= \frac{2\cdot4^{n-2k+1}(n+m-2l)!(n-k-l+1)!(1/2)_{n-2k+2} }
         { (k-l)! (2n-2k-2l+3)! (n+m-2k+1)! }.
\end{align*}
Therefore the product can be expressed as
\[  \cUV{n,m,l} \cUV{n-2l,m,k-l}^0 = \cmumut{n,m,k} \chypgeom{n,k,l} 
\]
where
\begin{align*}
  \chypgeom{n,k,l} &=  \frac{ 2\cdot 4^{n-2k+1}(n+m+1)!(n-k-l+1)!(1/2)_{n-2k+2} } 
                  { (-4^l)l!(k-l)! (2n-2k-2l+3)! (n+m-2k+1)! } \\
  &= \frac{ (-k)_l(-n+k-3/2)_l }{ l!(-n-1/2)_l}
\end{align*}
is the coefficient in the polynomial
$  {}_2F_1(-k,-n+k-3/2;-n-1/2;(\mut/\mu)^2) =
 \sum_{l=0}^k \chypgeom{n,k,l} (\mut/\mu)^{2l}$. 
\end{proof}
 
\begin{coro}\label{coro:proplim}
For each $n\geq0$, $0\leq m\leq n$, the limits
\begin{equation*}
 \lim_{\mut\rightarrow0} \cVVmu{n,m,k}[\mut,\mu], \quad
 \lim_{\mu\rightarrow0} \cVVmu{n,m,k}[\mut,\mu]
\end{equation*}
exist and are given, respectively, by  
\begin{equation*}
 \cVVmu{n,m,k}[0,\mu] = (n+m+1)\cUV{n,m,k}^0\mu^{2k}, \quad
 \cVVmu{n,m,k}[\mut,0] = \dfrac{\cUV{n,m,k}}{n+m-2k+1}\mut^{2k}.
\end{equation*}
\end{coro}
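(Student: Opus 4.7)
My plan is to read off both limits directly from the explicit formula of Theorem~\ref{th:cVVmu}. The key observation is that, since $-k$ is a non-positive integer, the Gaussian hypergeometric factor terminates: it is a polynomial of degree $k$ in $(\mut/\mu)^2$ whose coefficients $\chypgeom{n,k,l}$ were computed in the proof of that theorem. Consequently
\[
\cVVmu{n,m,k}[\mut,\mu]
 = \cmumut{n,m,k}\sum_{l=0}^{k}\chypgeom{n,k,l}\,\mut^{2l}\mu^{2(k-l)}
\]
is a polynomial in $(\mu^2,\mut^2)$ with no negative powers of $\mu$, so both one-sided limits exist immediately.

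For $\mut\to 0$, only the $l=0$ term survives, and since $\chypgeom{n,k,0}=1$ the limit equals $\cmumut{n,m,k}\mu^{2k}$. To identify this with $(n+m+1)\cUV{n,m,k}^0\mu^{2k}$, I would use Proposition~\ref{prop:UfromV} (which reduces the target to $\cUU{n+1,m,k}^0\mu^{2k}$) together with the explicit formula \eqref{eq:UbacktoUcoef}. After cancelling common factorials, the equality becomes a routine Pochhammer identity of the same flavour as the manipulations $(2n-4k+3)(1/2)_{n-2k+1}=2(1/2)_{n-2k+2}$ already used in the proof of Theorem~\ref{th:cVVmu}.

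For $\mu\to 0$, only the top-degree term $l=k$ survives, producing $\chypgeom{n,k,k}\cmumut{n,m,k}\mut^{2k}$. The target expression $\cUV{n,m,k}\mut^{2k}/(n+m-2k+1)$ equals $\cUU{n+1,m,k}\mut^{2k}$ by Corollary~\ref{coro:VfromU} combined with the identification $\cVV{n,m,k}=\cUU{n+1,m,k}$ stated just after \eqref{eq:VV}. The main (and really only) obstacle is the algebraic verification that $\chypgeom{n,k,k}\cmumut{n,m,k}=\cUU{n+1,m,k}$. Using $(-k)_k=(-1)^k k!$ together with the reversals
\[
(-n-1/2)_k = (-1)^k\frac{(1/2)_{n+1}}{(1/2)_{n-k+1}}, \qquad
(-n+k-3/2)_k = (-1)^k\frac{(1/2)_{n-k+2}}{(1/2)_{n-2k+2}},
\]
obtained by reading the respective descending products of half-integer factors in reverse order, the ratio $\chypgeom{n,k,k}\cmumut{n,m,k}/\cUU{n+1,m,k}$ collapses to $1$. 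Beyond this bookkeeping, nothing further is needed.
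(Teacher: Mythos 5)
Your proof is correct, and its core is the same as the paper's: both arguments exploit the fact, established in proving Theorem \ref{th:cVVmu}, that $\cVVmu{n,m,k}[\mut,\mu]$ is a polynomial in $(\mu^2,\mut^2)$, and then read off the two extreme terms $l=0$ and $l=k$. The only real difference is the representation you start from. The paper takes the limits directly in \eqref{eq:sumw}, rewriting it as
\begin{align*}
 \cVVmu{n,m,k}[\mut,\mu] ={}& \sum_{l=1}^{k-1} \cUV{n,m,l}\,\cUV{n-2l,m,k-l}^0\,\mu^{2(k-l)}\mut^{2l} \\
 &+\cUV{n,m,k}\,\cUV{n-2k,m,0}^0\,\mut^{2k}+\cUV{n,m,0}\,\cUV{n,m,k}^0\,\mu^{2k},
\end{align*}
so the corner coefficients are already products of $w$-coefficients and the stated limits follow from the trivial evaluations $\cUV{n,m,0}=n+m+1$ and $\cUV{n-2k,m,0}^0=1/(n+m-2k+1)$, both consequences of $\cUU{n,m,0}=\cUU{n,m,0}^0=1$. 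You instead start from the repackaged ${}_2F_1$ form in the statement of Theorem \ref{th:cVVmu}, which obliges you to undo the repackaging by verifying $\cmumut{n,m,k}=\cUU{n+1,m,k}^0$ (for the $\mut\to0$ limit) and $\chypgeom{n,k,k}\,\cmumut{n,m,k}=\cUU{n+1,m,k}$ (for $\mu\to0$). I checked these: your reversal formulas for $(-n-1/2)_k$ and $(-n+k-3/2)_k$ are right, and together with $(-k)_k=(-1)^kk!$, the reduction $(2n-4k+3)(1/2)_{n-2k+1}=2(1/2)_{n-2k+2}$, and $(n+m-2k+2)_{2k}=(n+m+1)!/(n+m-2k+1)!$ the ratios do collapse to $1$, so nothing is missing. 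The net effect is that your route is algebraically heavier where the paper's choice of \eqref{eq:sumw} makes the corollary a one-line observation, but it has the small virtue of proceeding from the theorem's final published formula rather than from an intermediate identity inside its proof.
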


\begin{proof}
We may write \eqref{eq:sumw} as
\begin{align*}
 \cVVmu{n,m,k}[\mut,\mu] &=
   \sum_{l=1}^{k-1} \cUV{n,m,l} \cUV{n-2l,m,k-l}^0  \, \mu^{2(k-l)}\mut^{2l} \\
 & \quad\ \ +\cUV{n,m,k}\cUV{n-2k,m,0}^0\mut^{2k}
   + \cUV{n,m,0}\cUV{n,m,k}^0\mu^{2k} 
\end{align*}
and then simply take $\mu=0$ or $\mut=0$ to obtain  the desired limit.
\end{proof}

Referring to \eqref{eq:VVinv}, we have
\[ \cVV{m,n,k}^0=  \frac{\cUV{n,m,k}}{(n+m-2k+1)}. \]


\section{Application to orthogonal monogenic 
and contragenic functions}

The standard bases for spheroidal harmonics have their counterparts
for the spaces of orthogonal monogenic polynomials taking values in
$\mathbb{R}^3$. Monogenic functions are defined by considering
$\mathbb{R}^3$ as the real linear subspace of the quaternions
$\mathbb{H} = \{\sum_{i=0}^3 x_ie_i\colon\ x_i \in \mathbb{R}\}$ for
which the last coordinate $x_3$ vanishes. (Quaternionic multiplication
is defined, as usual, so that $e_1^2 = e_2^2 = e_3^2 = -1$ and
$e_1 e_2 = e_3 = - e_2 e_1$, $e_2 e_3 = e_1 = - e_3 e_2$,
$e_3 e_1 = e_2 = - e_1 e_3$.) For background on quaternionic analysis
in $\R^3$, see \cite{Delanghe2007,Joao2009,
  MoraisGuerlebeck2012,MoraisGuerlebeck22012,MoraisAG}. A function
$f\colon\Omega_\mu\to\R^3$ is \textit{monogenic} when it is
annihilated by the quaternionic differential operator
$ \partial = \partial/\partial x_0+e_1\partial/\partial
x_1+e_2\partial/\partial x_2$ acting from the left. The \textit{basic
  spheroidal monogenic polynomials} are constructed
\cite{Morais4,Morais5} as
\begin{equation} \label{def:monog}
  \monog{n,m}{\pm}[\mu]  = \overline{\partial}(\harorigsol{n+1,m}{\pm}[\mu]),
\end{equation}
where
$ \overline{\partial} = \partial/\partial x_0 - e_1\partial/\partial
x_1- e_2\partial/\partial x_2$.
This is analogous to the definition \eqref{eq:prolateharmonics} for
harmonic polynomials. $\monog{n,m}{\pm}[\mu]$ is monogenic because
$\partial\overline{\partial}$ is equal to the Laplacian operator. We continue with the convention that $m\ge1$ when the ``-'' sign appears
in a superscript.

\begin{theo} [\cite{Morais4,Morais5}] \label{th:sphmonogformula}
  For all $n\geq0$, the basic spheroidal monogenic polynomial
  \eqref{def:monog} is equal to
\begin{align*}
  \monog{n,0}{+}[\mu] = \hargarabsol{n,0}{+}[\mu] -
   \frac{1}{n+2} \big(  \hargarabsol{n,1}{+}[\mu] e_1 +
                        \hargarabsol{n,1}{-}[\mu] e_2  \big)
\end{align*}
for $m=0$, and 
\begin{align*}
 \monog{n,m}{\pm}[\mu]  &=
 \hargarabsol{n,m}{\pm}[\mu] + \Bigl[(n+m+1) \hargarabsol{n,m-1}{\pm}[\mu] -
  \frac{1}{n+m+2} \hargarabsol{n,m+1}{\pm}[\mu] \Bigr]\frac{e_1}{2} \nonumber\\
 & \phantom{\quad \hargarabsol{n,m}{\pm}[\mu]\ } \mp \Bigl[(n+m+1) \hargarabsol{n,m-1}{\mp}[\mu] +
 \frac{1}{n+m+2} \hargarabsol{n,m+1}{\mp}[\mu] \Bigr] \frac{e_2}{2} 
\end{align*}
for   $1\leq m\leq n+1$.
  The polynomials $\monog{n,m}{\pm}[\mu]$
 are orthogonal in $L^2(\Omega_\mu)$, i.e.\ in the sense of the scalar
 product defined by
\[   \langle f,g\rangle_{[\mu]} = \int_{\Omega_\mu} {\rm Sc}(\overline{f}g)\,dV.
\]
\end{theo}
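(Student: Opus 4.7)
The plan is to unpack the definition $\monog{n,m}{\pm}[\mu] = \overline{\partial}(\harorigsol{n+1,m}{\pm}[\mu])$ one coordinate at a time. The scalar part is free: by \eqref{eq:prolateharmonics} we have $\partial_{x_0}\harorigsol{n+1,m}{\pm}[\mu] = \hargarabsol{n,m}{\pm}[\mu]$, which will be the leading term in every formula. All the work is concentrated in the two vector parts, so the main task reduces to computing $\partial_{x_1}\harorigsol{n+1,m}{\pm}[\mu]$ and $\partial_{x_2}\harorigsol{n+1,m}{\pm}[\mu]$ explicitly, in a form where Garabedian harmonics appear on the right-hand side with shifted index $m\pm1$ and scalar weights $(n+m+1)$ and $1/(n+m+2)$.

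To obtain those auxiliary identities I would work in the spheroidal coordinates \eqref{eq:prolatecoords}, invert the Jacobian to express $\partial_{x_1}$ and $\partial_{x_2}$ in terms of $\partial_u$, $\partial_v$, $\partial_\phi$, and then apply the classical contiguous relations for $P_n^m(\cos u)$ and $P_n^m(\cosh v)$ that raise and lower the order $m$ by one. The trigonometric factor $\Phi_m^\pm(\phi)$ in \eqref{eq:prolatepreharmonics} contributes the swap between $+$ and $-$ under $\partial_\phi$, which is precisely what produces the sign pattern that attaches the $m-1$ term to $e_1,e_2$ symmetrically and the $m+1$ term antisymmetrically. After simplification, the result should take the form
\begin{align*}
\partial_{x_1}\harorigsol{n+1,m}{\pm}[\mu] &= -\tfrac{1}{2}(n{+}m{+}1)\hargarabsol{n,m-1}{\pm}[\mu] + \tfrac{1}{2(n+m+2)}\hargarabsol{n,m+1}{\pm}[\mu],\\
\partial_{x_2}\harorigsol{n+1,m}{\pm}[\mu] &= \pm\tfrac{1}{2}(n{+}m{+}1)\hargarabsol{n,m-1}{\mp}[\mu] \pm \tfrac{1}{2(n+m+2)}\hargarabsol{n,m+1}{\mp}[\mu],
\end{align*}
(the $m=0$ row being degenerate, since $\hargarabsol{n,-1}{\pm}$ does not exist). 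Substituting into $\overline{\partial}\harorigsol{n+1,m}{\pm}[\mu] = \partial_{x_0}(\cdot) - e_1\partial_{x_1}(\cdot) - e_2\partial_{x_2}(\cdot)$ then gives the two stated formulas, with the $m=0$ case handled separately because only the $m+1$ term survives.

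For orthogonality I would use that $\Sc(\overline{f}g) = f_0 g_0 + f_1 g_1 + f_2 g_2$ when $f,g$ are $\R^3$-valued, so that
\[
\langle \monog{n,m}{\pm}[\mu],\monog{n',m'}{\pm'}[\mu]\rangle_{[\mu]}
 = \int_{\Omega_\mu} \bigl( f_0 g_0 + f_1 g_1 + f_2 g_2 \bigr)\,dV,
\]
where each component $f_i,g_j$ is, by the formula just proved, a linear combination of $\hargarabsol{n,k}{\pm}[\mu]$ with $k\in\{m{-}1,m,m{+}1\}$ (respectively $k\in\{m'{-}1,m',m'{+}1\}$). Garabedian's theorem \cite{Garabedian}, cited in the excerpt after \eqref{eq:prolateharmonics}, says the $\hargarabsol{n,k}{\pm}[\mu]$ form an orthogonal family in $L_2(\Omega_\mu)$. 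So the triple integral is a finite sum of terms, each proportional to $\langle\hargarabsol{n,k}{\sigma}[\mu],\hargarabsol{n',k'}{\sigma'}[\mu]\rangle_{[\mu]}$, all of which vanish when $(n,k,\sigma)\neq(n',k',\sigma')$. The surviving cross-terms for distinct $(n,m,\pm)\neq(n',m',\pm')$ need to cancel in pairs; this cancellation is produced by the symmetric/antisymmetric arrangement of the $m-1$ and $m+1$ contributions across the $e_1$ and $e_2$ slots, which is exactly why the coefficients in the two vector parts carry opposite relative signs in the second formula.

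The main obstacle is the derivation of the auxiliary identities for $\partial_{x_1}$ and $\partial_{x_2}$: this requires juggling the spheroidal chain rule (which mixes $u$ and $v$ derivatives with factors of $\sin u,\cos u,\sinh v,\cosh v$), the $m$-raising and $m$-lowering recurrences for $P_n^m$, and the Pochhammer normalization in \eqref{eq:prolatepreharmonicsq}, all while checking that the $\mu$-dependent prefactors recombine cleanly into Garabedian harmonics of the \emph{same} $\mu$. Once these formulas are in hand, both the explicit expression for $\monog{n,m}{\pm}[\mu]$ and its orthogonality are straightforward consequences.
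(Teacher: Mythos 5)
The paper offers no proof to compare against: Theorem \ref{th:sphmonogformula} is imported verbatim from \cite{Morais4,Morais5}, and your plan --- apply $\overline{\partial}$ to $\harorigsol{n+1,m}{\pm}[\mu]$, reduce everything to raising/lowering identities for $\partial_{x_1},\partial_{x_2}$, then get orthogonality from Garabedian's orthogonality of the $\hargarabsol{n,m}{\pm}[\mu]$ --- is essentially the construction in those cited sources. Your two auxiliary identities are exactly equivalent to the claimed formulas (I checked them at $\mu=0$, e.g.\ on $\harorigsol{2,0}{+}[0]$ and $\harorigsol{1,1}{+}[0]$), with one caveat: they hold with the stated signs only under the Condon--Shortley-type phase for $P_n^m$ that the theorem implicitly presupposes; with Hobson's unsigned convention \cite{Hobson1931}, which the paper nominally cites, the signs on both vector parts flip. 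Since your whole derivation runs through ``classical contiguous relations for $P_n^m$,'' you must fix this convention explicitly or you will reproduce the formula up to an undetected sign.

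The genuine gap is the step you yourself flag as ``the main obstacle'' and then leave unproved, and it is the actual mathematical content of the theorem: for $\mu\neq 0$, $\partial_{x_1}\harorigsol{n+1,m}{\pm}[\mu]$ is a harmonic polynomial of degree $n$, so a priori it expands over $\hargarabsol{n-2j,m\pm1}{\pm}[\mu]$ for \emph{all} $j\ge 0$; the claim is that every lower-degree term vanishes and the coefficients are the same $\mu$-independent ones as on the ball, and nothing in your sketch forces this. Rather than inverting the spheroidal Jacobian and juggling $u,v$-recurrences, you can close this gap with material already in the paper: prove the identities at $\mu=0$ by your recurrence argument, expand $\harorigsol{n+1,m}{\pm}[\mu]$ via \eqref{eq:UqfromUq}, apply $\overline{\partial}$ termwise, and regroup the result into spheroidal Garabedian harmonics using \eqref{eq:VV} together with the ratio identity $\cUU{n+1,m+1,k}=\frac{n+m+2}{n+m-2k+2}\,\cUU{n+1,m,k}$, read off from \eqref{eq:cUU} (these are the same coefficient manipulations as in the proof of Proposition \ref{prop:contragenicrelations}). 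Finally, your orthogonality argument is correct in outline but incomplete as stated: the cross terms between, say, $\monog{n,m}{\pm}[\mu]$ and $\monog{n,m+2}{\pm}[\mu]$ do not vanish term by term; the $e_1$ and $e_2$ contributions cancel each other only because $\|\hargarabsol{n,k}{+}[\mu]\|_{[\mu]}=\|\hargarabsol{n,k}{-}[\mu]\|_{[\mu]}$ for $k\ge1$ (equal azimuthal integrals of $\cos^2 k\phi$ and $\sin^2 k\phi$), a fact your ``symmetric/antisymmetric arrangement'' appeal needs but never states.
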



\subsection{Bases for monogenics in distinct spheroids}

Analogously to \eqref{eq:VV} and \eqref{eq:VVinv},
we now express $\monog{n,m}{\pm}[\mu]$ in terms of the spherical
monogenics $\monog{n,m}{\pm}[0]$.

\begin{theo}\label{th:monogenics} 
  For  $n\geq0$ and $0\leq m \leq n+1$,
\begin{align*}
  \monog{n,m}{\pm}[\mu] =& \sum_{0\le 2k\le n-m+1}
      \cVV{n,m,k} \mu^{2k} \monog{n-2k,m}{\pm}[0],\\
  \monog{n,m}{\pm}[0]   =& \sum_{0\le 2k\le n-m+1}
     \cVV{n,m,k}^0 \mu^{2k} \monog{n-2k,m}{\pm}[\mu],\\
  \monog{n,m}{\pm}[\mut]   =& \sum_{0\leq2k\leq n-m+1}
         \cVV{n,m,k}[\mut,\mu]\, \monog{n-2k,m}{\pm}[\mu],
\end{align*}
where $\cVV{n,m,k}$,  $\cVV{n,m,k}^0$, and $\cVV{n,m,k}[\mu,\mut]$   are
  as in the previous section.
\end{theo}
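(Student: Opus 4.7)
The plan is to exploit the definition $\monog{n,m}{\pm}[\mu] = \overline{\partial}(\harorigsol{n+1,m}{\pm}[\mu])$ from \eqref{def:monog}.  Since $\overline{\partial}$ is a linear differential operator with constant coefficients, it commutes with scalar linear combinations, so applying $\overline{\partial}$ term by term to any expansion of $\harorigsol{n+1,m}{\pm}[\ast]$ converts it into the analogous expansion of $\monog{n,m}{\pm}[\ast]$ with every coefficient preserved.

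For the first identity I would take \eqref{eq:UqfromUq} at degree $n+1$,
$$\harorigsol{n+1,m}{\pm}[\mu] = \sum_{0\le 2k\le n-m+1}\cUU{n+1,m,k}\,\mu^{2k}\,\harorigsol{n+1-2k,m}{\pm}[0],$$
apply $\overline{\partial}$ to both sides, and recall that $\cVV{n,m,k} = \cUU{n+1,m,k}$.  The same procedure applied to \eqref{eq:UbacktoU} at degree $n+1$ yields the second identity, provided one knows $\cVV{n,m,k}^0 = \cUU{n+1,m,k}^0$.  This equality is obtained by setting $\mut=0$ in Theorem \ref{th:cVVmu}, matching with \eqref{eq:VVinv}, and combining Corollary \ref{coro:proplim} with Proposition \ref{prop:UfromV} to deduce $\cVV{n,m,k}^0 = (n+m+1)\cUV{n,m,k}^0 = \cUU{n+1,m,k}^0$.

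For the third identity I would compose the first two formulas.  Substituting the expansion of $\monog{n-2l,m}{\pm}[0]$ given by the second formula into the expansion of $\monog{n,m}{\pm}[\mut]$ given by the first (with $\mu$ replaced by $\mut$), and collecting terms of fixed total shift $K$, one arrives at
$$\monog{n,m}{\pm}[\mut] = \sum_K \Bigl(\sum_{l=0}^{K}\cVV{n,m,l}\,\cVV{n-2l,m,K-l}^0\,\mut^{2l}\mu^{2(K-l)}\Bigr)\monog{n-2K,m}{\pm}[\mu].$$
It then remains to recognise the inner sum as $\cVV{n,m,K}[\mut,\mu]$.  Using $\cVV{n,m,l} = \cUV{n,m,l}/(n+m-2l+1)$ from Corollary \ref{coro:VfromU} and $\cVV{n-2l,m,K-l}^0 = (n-2l+m+1)\cUV{n-2l,m,K-l}^0$ from the calculation above, the factor $(n+m-2l+1)$ cancels and the resulting sum matches exactly \eqref{eq:sumw}, which by Theorem \ref{th:cVVmu} evaluates to $\cVV{n,m,K}[\mut,\mu]$.

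The only genuine obstacle is this last piece of bookkeeping; the real hypergeometric content has already been carried out for the Garabedian harmonics in Theorem \ref{th:cVVmu}, so once the coefficients are seen to agree the monogenic version transfers with no additional analytical input.
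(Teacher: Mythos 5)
Your proposal is correct, but it takes a genuinely different route from the paper's own proof. You work from the definition \eqref{def:monog}: since $\monog{n,m}{\pm}[\mu]=\overline{\partial}\,\harorigsol{n+1,m}{\pm}[\mu]$ and $\overline{\partial}$ is linear, applying it to \eqref{eq:UqfromUq} and to \eqref{eq:UbacktoU} at degree $n+1$ immediately yields the first two identities --- exactly the mechanism by which the paper itself obtained \eqref{eq:VV} from \eqref{eq:UqfromUq} using $\partial/\partial x_0$. Your coefficient identification $\cVV{n,m,k}^0=(n+m+1)\cUV{n,m,k}^0=\cUU{n+1,m,k}^0$ is correct and consistent with the first limit in Corollary \ref{coro:proplim} (indeed it quietly corrects the inline formula stated after that corollary, which reproduces the expression for $\cVV{n,m,k}$ rather than $\cVV{n,m,k}^0$), and in your third step the factor $n+m-2l+1$ from $\cVV{n,m,l}=\cUV{n,m,l}/(n+m-2l+1)$ does cancel against $\cVV{n-2l,m,K-l}^0=(n-2l+m+1)\cUV{n-2l,m,K-l}^0$, so the inner sum is precisely the right-hand side of \eqref{eq:sumw} and hence equals $\cVV{n,m,K}[\mut,\mu]$ by the proof of Theorem \ref{th:cVVmu}. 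The paper argues structurally instead: for fixed degree, $\{\monog{k,m}{\pm}[0]\}$ and $\{\monog{k,m}{\pm}[\mu]\}$ are two bases of the same space of $\R^3$-valued monogenic polynomials, so an expansion with unknown real coefficients exists; taking scalar parts, using Theorem \ref{th:sphmonogformula} (whose scalar part is $\hargarabsol{n,m}{\pm}[\mu]$) and the uniqueness of the harmonic expansion \eqref{eq:VV} pins down the coefficients, and the third identity comes from the product/inverse structure of the coefficient matrices. Your route is more economical --- it needs neither Theorem \ref{th:sphmonogformula} nor the basis argument, only linearity of $\overline{\partial}$ --- whereas the paper's argument illustrates the more robust principle that any identity among scalar parts lifts automatically to the monogenics, which would apply even if the $\monog{n,m}{\pm}$ were not obtained by differentiating an already-expanded family. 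One small point of scope: Theorem \ref{th:cVVmu} assumes $\mu\neq0$, so your third identity inherits that restriction (for $\mu=0$ it degenerates to the first formula, in accordance with Corollary \ref{coro:proplim}), matching the paper's intent.
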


\begin{proof} Fix a value of $\mu$. Note that for given $n$, the
  collections $\{\DS\monog{k,m}{\pm}[0]\colon\ k\le n,\ 0\le m\le k\}$ and
  $\{\DS\monog{k,m}{\pm}[\mu]\colon\ k\le n,\ 0\le m\le k\}$ are bases for the same
  linear space, namely the monogenic $\mathbb{R}^3$-valued polynomials in the
  variables $(x_0,x_1,x_2)$ of degree $\le n$. Therefore there must
  exist real coefficients $a^\pm_{k}$ such that
  $\monog{n,m}{+}[\mu]=\sum_k\sum_m a^+_k\monog{n,k}{+}[0] +\sum_k\sum_m 
  a^-_k\monog{n,k}{-}[0]$.
  By Theorem \ref{th:sphmonogformula}, the scalar part of this equation
  expresses the spheroidal harmonics $\hargarabsol{n,m}{\pm}[\mu]$ as a
  linear combination of the spherical harmonics
  $\hargarabsol{k,m}{\pm}[0]$. By the uniqueness of the representation
  \eqref{eq:VV} we have that $a^\pm_{k} = \cVV{n,m,k} \mu^{2k}$. The
  second formula follows by the same reasoning, and then the relationship
  between $\monog{n,m}{\pm}[\mu]$ and $\monog{n,m}{\pm}[\mut]$ is a
  consequence of the fact that by Theorem \ref{th:cVVmu} the matrix
  $(\cVV{n,m,k}[\mut,\mu])_{n,k}$ is essentially the product of
  $(\cVV{n,m,k}\mut^{2k})_{n,k}$ and the inverse of
  $(\cVV{n,m,k}^0\mu^{2k})_{n,k}$.
\end{proof}

\subsubsection{Spheroidal ambigenic polynomials} \textit{Antimonogenic} 
functions (quaternionic conjugates of monogenics, i.e.\ annihilated by
$\overline{\partial}$) are generally not studied independently, since
their properties may be obtained by taking the conjugate of facts
about monogenic functions.  For example, the basic antimonogenic
polynomials satisfy essentially the same relation as given in Theorem
\ref{th:monogenics},
\[ \antimonog{n,m}{\pm}[\mu] =
  \sum_{0\leq2k\leq n-m} \cVV{n,m,k}[\mu,\mut]\, \antimonog{n-2k,m}{\pm}[\mut].
\]
However, the subspace of the $\R^3$-valued harmonic functions
generated by the monogenic and antimonogenic functions together, that
is, the \textit{ambigenic} functions \cite{Alvarez}, is of interest.

An ambigenic function is not represented uniquely as a sum of a
monogenic and an antimonogenic function because one may add and
subtract a \textit{monogenic constant}, that is, a function which
is simultaneously monogenic and antimonogenic.
A collection of ambigenic polynomials denoted
$\{\ambibasic{n,m}{\pm,\pm}[\mu]\}$ was constructed in \cite{GMP} and
shown to be a basis of $2n(n+3)+3$ elements for the ambigenic
polynomials of degree no greater than $n$, mutually orthogonal in
$L^2(\Omega_\mu)$. For our purposes we will only need  the particular
ambigenic functions
\begin{equation}  \label{eq:defambi}
  \ambig{n,m}{\pm}[\mu] = 2\Vec \monog{n,m}{\pm}[\mu]
  = \monog{n,m}{\pm}[\mu] - \antimonog{n,m}{\pm}[\mu],
\end{equation}  
where $q=\Sc q + \Vec q$ denotes the decomposition of a quaternionic
quantity into its scalar and vector parts. It is simple to verify
that for fixed $\mu$, the $\ambig{n,m}{\pm}[\mu]$ are linearly independent.


\subsection{Relations among contragenic functions for distinct spheroids}

The notion of contragenic harmonic functions was introduced in
\cite{Alvarez}, arising from the previously unobserved fact that in
contrast to $\C$-valued or $\H$-valued functions, there exist
$\R^3$-valued harmonic functions which are not ambigenic.  Thus a
function is called \textit{contragenic} for a given domain $\Omega$
when it is orthogonal in $L^2(\Omega)$ to all monogenic and
antimonogenic functions in $\Omega$. In contrast to monogenicity and
antimonogenicity, this is not a local property and therefore cannot be
characterized in general by direct application of any differential
operator.  It is of interest to have a basis for the contragenic
functions, in order to express an arbitrary harmonic function in a
calculable way as a sum of an ambigenic function and a contragenic
function. In the following, we will write
\begin{align*}
  \N_*^{(n)}[\mu] =\ & 
  \{\parbox[t]{.6\textwidth}{polynomials 
     of degree $\le n$ in $x_0,x_1,x_1$ which are
     orthogonal in $L_2(\Omega_\mu)$ to all ambigenic
     functions in  $\Omega_\mu\}$,}
\end{align*}
for $n\ge 1$ (nonzero constant harmonic functions are never
contragenic, so we will have no use for $\N_*^{(0)}[\mu]=\{0\}$), and
we have the successive orthogonal complements
\[ \N^{(n)}[\mu] = \N_*^{(n)}[\mu] \ominus \N_*^{(n-1)}[\mu], \]
which are composed of polynomials of degree precisely $n$. Thus
$\N_*^{(n)}[\mu] =\bigoplus_{k=1}^n\N^{(k)}[\mu]$ and there is a
Hilbert space orthogonal decomposition
$\N_*[\mu] =\bigoplus_{k=1}^\infty\N^{(k)}[\mu]$ of the full
collection of contragenic functions in $L^2(\Omega_\mu)$. The
following explicit construction of a basis of the $\N^{(n)}[\mu]$,
using as building blocks the scalar components of the monogenic
functions, can be found in \cite{GMP}. Write 
\begin{align} 
 \normrat{n,0}[\mu]=& \ 1, \nonumber \\ 
 \normrat{n,m}[\mu] =& \left( \frac{1}{(n+m+1)_2}
     \frac{ \|\hargarabsol{n,m+1}{+}[\mu]\|_{[\mu]}}
          {  \|\hargarabsol{n,m-1}{+}[\mu] \|_{[\mu]}}\right)^2   \label{eq:normrat} 
\end{align}
for $1\le m\le n-1$, and $\normrat{n,m}[\mu]=0$ for $m\ge n$ since
then $\hargarabsol{n,m}{\pm}[\mu]=0$ (this definition involves a slight modification of the
notation in \cite{GMP}), where integration over the ellipsoid gives
explicitly
\begin{equation*}\label{eq:harqnorms}
    \|\hargarabsol{n,m}{\pm}[\mu]\|_{[\mu]}^2 =
     (1+\delta_{0,m})\normconst{n,m} \pi \mu^{2n+3} 
     \int_{1}^{\frac{1}{\mu}}P_{n}^{m}(t)P_{n+2}^{m}(t)\,dt. 
\end{equation*}
Here $\delta_{m,m'}$ is the Kronecker symbol and
\[   \normconst{n,m}  =
    \frac{  (n+m+1) (n+m+1)!(n-m+2)!} {2^{2n+1} (1/2)_{n+1}(1/2)_{n+2} }  .
\]

\begin{defi}  \label{def:Basic_contragenics} For all $n\ge1$, the
\textit{basic contragenic polynomials} $\contra{n,m}{\pm}[\mu]$
associated to $\Omega_\mu$ are
\begin{align*}
   \contra{n,0}{+}[\mu] =& -\ambig{n,0}{+}[\mu] e_3  
\end{align*}
for $m=0$, and
\begin{align*}
 \contra{n,m}{\pm}[\mu] =
 \frac{1}{2}\big(
 \mp(\normrat{n,m}[\mu]+1)\ambig{n,m}{\pm}[\mu]
  + (\normrat{n,m}[\mu]-1)\ambig{n,m}{\mp}[\mu] e_3 \big)
 \end{align*} 
for $1\leq m\leq n-1$, where $\ambig{n,m}{\pm}[\mu]$ are defined by \eqref{eq:defambi}.
\end{defi}

In \cite{GMP} it was shown that
$\{\contra{n,m}{\pm}[\mu]\colon\ 0\le m< n-1\}$ is an orthonormal
basis for $\N^{(n)}[\mu]$, and that the harmonic polynomials of degree
$\le n$ in $\Omega_\mu$ decompose as orthogonal direct sums of the
ambigenic and contragenic polynomials of degree $\le n$.  With the
further notation
\begin{align*}
  \angPsi{+,m}{\pm} &= \Phi_m^{\pm}(\phi)e_1 \pm
             \Phi_m^{\mp}(\phi)e_2, \\
  \angPsi{-,m}{\pm} &= \Phi_m^{\pm}(\phi)e_1 \mp
             \Phi_m^{\mp}(\phi)e_2,
\end{align*}
which satisfy the obvious relations
$\angPsi{+,m}{\pm}e_3=\pm\angPsi{+,m}{\mp}$,
$\angPsi{-,m}{\pm}e_3=\mp\angPsi{-,m}{\mp}$,
$e_1\hargarabsol{n,m}{\pm}[\mu]+e_2\hargarabsol{n,m}{\mp}[\mu]=
\hargarab{n,m}\angPsi{\pm,m}{\pm}[\mu]$,
$e_1\hargarabsol{n,m}{\pm}[\mu]-e_2\hargarabsol{n,m}{\mp}[\mu]=
\hargarab{n,m}\angPsi{\mp,m}{\pm}[\mu]$  (where the
$\hargarab{n,m}[\mu]$ are given by \eqref{eq:Vhat}), the definitions
give us almost immediately that
\begin{align}
 \ambig{n,0}{+}[\mu] &=
   \frac{-2}{n+2} \hargarab{n,1}[\mu] \angPsi{+,1}{+},\nonumber\\
 \ambig{n,m}{\pm}[\mu] &=  
   (n+m+1)\hargarab{n,m-1}[\mu] \angPsi{-,m-1}{\pm} \nonumber\\
     &\ \ -\dfrac{1}{n+m+2}\hargarab{n,m+1}[\mu] \angPsi{+,m+1}{\pm}, \label{eq:ambiformula}  
\end{align}
\begin{align}
 \contra{n,0}{+}[\mu] &=
   \frac{2}{n+2} \hargarab{n,1}[\mu]\angPsi{+,1}{-},\nonumber\\
 \contra{n,m}{\pm}[\mu] &= (n+m+1)\normrat{n,m}[\mu]
   \hargarab{n,m-1}[\mu] \angPsi{-,m-1}{\mp} \nonumber \\
     &\ \ + \dfrac{1}{n+m+2}\hargarab{n,m+1}[\mu]
       \angPsi{+,m+1}{\mp},   \label{eq:contraformula}
\end{align}
where $1 \le m \leq n-1$. 

Adding and subtracting instances of \eqref{eq:ambiformula} and
\eqref{eq:contraformula} gives by cancellation  decompositions of the
harmonic polynomials $\hargarab{n,m}\angPsi{+,m}{\pm}$ and $\hargarab{n,m}\angPsi{-,m}{\pm}$ as the sum of a
contragenic and an ambigenic:

\begin{lema}\label{lem:VZA}
Let $n\geq1$ and  $1\leq m \leq n+1$. Then
\begin{align*}
  \hargarab{n,m-1}[\mu]\angPsi{-,m-1}{\pm} =
   \dfrac{1}{(n+m+1)(\normrat{n,m}[\mu]+1)}\big(&\contra{n,m}{\mp}[\mu]
     + \ambig{n,m}{\pm}[\mu] \big),
\end{align*}
and
\begin{align*}
  \hargarab{n,m+1}[\mu]\angPsi{+,m+1}{\pm} =
   \dfrac{n+m+2}{ \normrat{n,m}[\mu]+1}\big(&\contra{n,m}{\mp}[\mu]
     - \normrat{n,m}[\mu]\ambig{n,m}{\pm}[\mu] \big) .
\end{align*}
\end{lema}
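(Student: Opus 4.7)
The plan is to verify both identities by direct linear combination of equations \eqref{eq:ambiformula} and \eqref{eq:contraformula}, exploiting the fact that each of these two formulas expresses the corresponding basic polynomial as a linear combination of exactly the two terms $\hargarab{n,m-1}[\mu]\angPsi{-,m-1}{\pm}$ and $\hargarab{n,m+1}[\mu]\angPsi{+,m+1}{\pm}$. Thus the system is a $2\times 2$ linear system that can be inverted by inspection; the key observation is that the angular factors $\angPsi{\pm,m}{\pm}$ in the contragenic formula carry the \emph{opposite} sign superscript from those in the ambigenic formula, so to cancel the $\angPsi{-,m-1}{\pm}$ term one must pair $\ambig{n,m}{\pm}[\mu]$ with $\contra{n,m}{\mp}[\mu]$, not with $\contra{n,m}{\pm}[\mu]$.

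The concrete steps will be: first, fix the sign choice $\pm$ in the statement and write down the two identities \eqref{eq:ambiformula} and \eqref{eq:contraformula} with $\mp$ substituted appropriately so that both right-hand sides involve the same two angular basis elements $\angPsi{-,m-1}{\pm}$ and $\angPsi{+,m+1}{\pm}$. Second, add the two expressions, observing that the $\angPsi{+,m+1}{\pm}$ contributions carry coefficients $-1/(n+m+2)$ and $+1/(n+m+2)$ and thus cancel, while the $\angPsi{-,m-1}{\pm}$ contributions add to $(n+m+1)(\normrat{n,m}[\mu]+1)\hargarab{n,m-1}[\mu]\angPsi{-,m-1}{\pm}$, yielding the first identity after dividing through. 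Third, take the combination $\contra{n,m}{\mp}[\mu]-\normrat{n,m}[\mu]\ambig{n,m}{\pm}[\mu]$, which by design cancels the $\angPsi{-,m-1}{\pm}$ term and leaves $(1+\normrat{n,m}[\mu])/(n+m+2)\cdot\hargarab{n,m+1}[\mu]\angPsi{+,m+1}{\pm}$, producing the second identity after rearrangement.

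The only subtle point, and the place most likely to produce sign errors, is correctly matching the $\pm/\mp$ superscripts on the angular factors $\angPsi{\pm,m}{\pm}$ between the ambigenic and contragenic formulas; once this matching is correct, the algebraic cancellation is immediate and no further computation is needed. The endpoint cases $m=1$ (where $\hargarab{n,0}[\mu]$ is scalar and $\angPsi{-,0}{\pm}$ must be interpreted consistently with the $m=0$ case in Definition \ref{def:Basic_contragenics}) and $m=n+1$ (where $\hargarab{n,m+1}[\mu]=0$ and $\normrat{n,m}[\mu]=0$) should be checked to ensure the formulas remain consistent with the conventions adopted earlier, but in both cases the asserted identities reduce to tautologies or to the definitions themselves.
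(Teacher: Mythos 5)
Your proposal is correct and is exactly the paper's argument: the paper derives the lemma by adding and subtracting instances of \eqref{eq:ambiformula} and \eqref{eq:contraformula}, which is precisely your inversion of the $2\times2$ system, including the key pairing of $\contra{n,m}{\mp}[\mu]$ with $\ambig{n,m}{\pm}[\mu]$ so the angular factors match. Your attention to the sign superscripts and the endpoint conventions ($\hargarab{n,m+1}[\mu]=0$, $\normrat{n,m}[\mu]=0$ for $m\ge n$) is consistent with, and slightly more careful than, the paper's own treatment.
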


The definition of contragenic function does not imply that an
$L^2$-function which belongs to the space $\N_*^{(n)}[\mut]$ should also
be in $\N_*^{(n)}[\mu]$ when $\mut\neq\mu$, because the notion of
orthogonality is different for different spheroids. In other words, we
may not expect a formula like
``$\contra{n,m}{\pm}[\mut]=\sum
z_{n,m,k}[\mut,\mu]\contra{n-2k,m}{\pm}[\mu]$.'' The following result
will enable us to give many examples for which
$\contra{n,m}{\pm}[\mut]\not\in\N_*^{(n)}[\mu]$ for $m\geq1$. However,
it also shows that the intersection of all of the $\N_*^{(n)}[\mu]$ is
nontrivial, giving what may be called {\it universal contragenic
  functions} in the context of spheroids.

We will use the coefficients
\begin{align}
 \cZZmu{n,0,k}{C}[\mut,\mu]  =& \
     \dfrac{n-2k+2}{n+2}\cVVmu{n,1,k}[\mut,\mu], \nonumber\\
 \cZZmu{n,m,k}{C}[\mut,\mu] =&\   \left\{\begin{array}{ll}
     \dfrac{\normrat{n,m}[\mut]+1}{\normrat{n-2k,m}[\mu]+1}
       \cVVmu{n,m,k}[\mut,\mu],\ \quad & 0\le2k\le n-m-1,\\[2ex]
     \dfrac{\normrat{n,m}[\mut]}{\normrat{n-2k,m}[\mu]+1}
       \cVVmu{n,m,k}[\mut,\mu], &   n-m\le2k\le n-m+1; 
          \end{array}\right. \nonumber \\
   \cZZmu{n,m,k}{A}[\mut,\mu] =&\ \left\{\begin{array}{ll}
      \dfrac{\normrat{n,m}[\mut]-\normrat{n,m}[\mu]}{\normrat{n-2k,m}[\mu]+1}
       \cVVmu{n,m,k}[\mut,\mu],  & 0\le2k\le n-m-1,\\[2ex]
        \dfrac{\normrat{n,m}[\mut] }{\normrat{n-2k,m}[\mu]+1}
       \cVVmu{n,m,k}[\mut,\mu], &  n-m\le2k\le n-m+1; 
               \end{array}\right.     \label{eq:CZZmu}
\end{align} 
($1 \le m \le n-1$) to express the decomposition of contragenics for one
spheroid in terms of contragenics and ambigenics of any other.

\begin{prop} \label{prop:contragenicrelations}
   Let $n\geq1$. Then
\begin{align*}
  \contra{n,0}{+}[\mut] &= \sum_{0\leq 2k\leq n-1} 
   \cZZmu{n,k}{C}[\mut,\mu]\contra{n-2k,0}{}[\mu];
\end{align*}
 and for $1\leq m\leq n-1$,
\begin{equation*}
   \contra{n,m}{\pm}[\mut]=\sum_{0\leq 2k\leq n-m+1}
   \big( \cZZmu{n,m,k}{C}[\mut,\mu]\contra{n-2k,m}{\pm}[\mu] +
         \cZZmu{n,m,k}{A}[\mut,\mu] \ambig{n-2k,m}{\pm}[\mu] \big).
\end{equation*}
 \end{prop}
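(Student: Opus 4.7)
The plan is to execute three substitutions in sequence. First, express $\contra{n,m}{\pm}[\mut]$ through the defining formula \eqref{eq:contraformula} as a linear combination of $\hargarab{n,m-1}[\mut]\angPsi{-,m-1}{\mp}$ and $\hargarab{n,m+1}[\mut]\angPsi{+,m+1}{\mp}$ (only the second term is present when $m=0$). Second, apply Theorem~\ref{th:cVVmu} to expand each spheroidal harmonic $\hargarab{n,m\pm1}[\mut]$ as the sum $\sum_k \cVVmu{n,m\pm1,k}[\mut,\mu]\,\hargarab{n-2k,m\pm1}[\mu]$; the angular factors $\angPsi{\mp,m\pm1}{\mp}$ are independent of the spheroidal parameter and pass through the summation. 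Third, invoke Lemma~\ref{lem:VZA} at parameter $\mu$ and index pair $(n-2k,m)$ to rewrite each resulting product $\hargarab{n-2k,m\pm1}[\mu]\angPsi{\pm,m\pm1}{\mp}$ as a combination of $\contra{n-2k,m}{\pm}[\mu]$ and $\ambig{n-2k,m}{\pm}[\mu]$. Grouping by $k$ then yields the stated decomposition; matching coefficients against \eqref{eq:CZZmu} is a finite algebraic exercise.

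In the scalar case $m=0$ only the $(m+1)$-branch survives, and the third step collapses to a one-line computation: the identity $\contra{n-2k,0}{+}[\mu]=\frac{2}{n-2k+2}\hargarab{n-2k,1}[\mu]\angPsi{+,1}{-}$, read off from the definition, produces the coefficient $\frac{n-2k+2}{n+2}\cVVmu{n,1,k}[\mut,\mu]$, which is $\cZZmu{n,0,k}{C}[\mut,\mu]$ by definition. For $1\le m\le n-1$ both branches contribute, and the key algebraic observation needed to force a common factor is that the $m$-dependence of $\cVVmu{n,m,k}[\mut,\mu]$ lies entirely in the rational factor $\cmumut{n,m,k}$; its explicit Pochhammer form gives the clean ratios $\cVVmu{n,m-1,k}=\frac{n+m-2k+1}{n+m+1}\cVVmu{n,m,k}$ and $\cVVmu{n,m+1,k}=\frac{n+m+2}{n+m-2k+2}\cVVmu{n,m,k}$. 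Combined with the prefactors $(n+m+1)\normrat{n,m}[\mut]$ and $1/(n+m+2)$ from \eqref{eq:contraformula}, and the denominators $(n-2k+m+1)(\normrat{n-2k,m}[\mu]+1)$ and $(\normrat{n-2k,m}[\mu]+1)/(n-2k+m+2)$ from Lemma~\ref{lem:VZA}, every contribution carries a common $\cVVmu{n,m,k}$, leaving only $\normrat$-dependent factors to be simplified against \eqref{eq:CZZmu}.

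The main obstacle is the boundary in $k$. Theorem~\ref{th:cVVmu} expands $\hargarab{n,m+1}[\mut]$ only for $0\le 2k\le n-m-1$, while $\hargarab{n,m-1}[\mut]$ extends through $2k\le n-m+1$; in the two extra values $2k\in\{n-m,n-m+1\}$ the $(m+1)$-branch contributes nothing, and simultaneously $\normrat{n-2k,m}[\mu]=0$ because $m\ge n-2k$, so that Lemma~\ref{lem:VZA} degenerates. One must then verify by direct computation that the surviving $(m-1)$-branch contributions collapse, after division by $\normrat{n-2k,m}[\mu]+1=1$, to the alternative expressions in the second case of \eqref{eq:CZZmu}, in which the factor $\normrat{n,m}[\mut]$ replaces $\normrat{n,m}[\mut]+1$ for the $C$-coefficient and the difference $\normrat{n,m}[\mut]-\normrat{n,m}[\mu]$ for the $A$-coefficient. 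Getting this boundary bookkeeping right --- in particular, confirming that the dichotomy in \eqref{eq:CZZmu} is exactly what emerges from the vanishing of the $(m+1)$-piece --- is the step that will demand the most care.
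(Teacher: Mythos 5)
Your proposal is correct and follows essentially the same route as the paper's proof: expand $\contra{n,m}{\pm}[\mut]$ via \eqref{eq:contraformula}, convert the spheroidal harmonics with Theorem \ref{th:cVVmu}, return to contragenics and ambigenics via Lemma \ref{lem:VZA}, and use the shift identities $\cVVmu{n,m-1,k}=\frac{n+m-2k+1}{n+m+1}\cVVmu{n,m,k}$, $\cVVmu{n,m+1,k}=\frac{n+m+2}{n+m-2k+2}\cVVmu{n,m,k}$ to extract a common factor, with the boundary cases $n-m\le 2k\le n-m+1$ (where the $(m+1)$-branch is absent and $\normrat{n-2k,m}[\mu]=0$) producing exactly the dichotomy in \eqref{eq:CZZmu}. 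The only cosmetic difference is that you obtain the shift identities from the explicit Pochhammer form of $\cmumut{n,m,k}$ in Theorem \ref{th:cVVmu}, while the paper derives them from the product identity for the $\cUV{}$-coefficients through \eqref{eq:sumw}; both are valid.
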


\begin{proof}
  Apply Theorem \ref{th:cVVmu} to the first formula of
  \eqref{eq:contraformula} with $\mut$ in place of $\mu$ to obtain that
\begin{align*}
 \contra{n,0}{+}[\mut] =  \frac{2}{n+2} \sum_{0\leq 2k\leq n-1}
     \cVVmu{n,1,k}[\mut,\mu]\hargarab{n-2k,1}[\mu] \angPsi{+,1}{-}  ,
\end{align*}
which after another application of \eqref{eq:contraformula} reduces to
the first statement.  In the same way, for $m\ge1$,
\begin{align}
  \contra{n,m}{\pm}[\mut] =&
   \  (n+m+1)\normrat{n,m}[\mut] \sum_{0\leq 2k\leq n-m+1}
      \cVVmu{n,m-1,k}[\mut,\mu]\hargarab{n-2k,m-1}[\mu] 
       \angPsi{-,m-1}{\pm} \nonumber\\
   & \  + \dfrac{1}{n+m+2} \sum_{0\leq 2k\leq n-m-1}
       \cVVmu{n,m+1,k}[\mut,\mu]\hargarab{n-2k,m+1}[\mu] 
       \angPsi{+,m+1}{\pm}. \label{eq:Zsum}
\end{align}
We observe from the definitions leading to Proposition
\ref{prop:UfromV} that
\begin{align*}
  \cUV{n,m-1,l}\,\cUV{n-2l,m-1,k-l}^0
 = \dfrac{n+m-2k+1}{n+m+1}\cUV{n,m,l}\,\cUV{n-2l,m,k-l}^0,
\end{align*}
so \eqref{eq:sumw} tells us that
\begin{align*}
\dfrac{n+m+1}{n+m-2k+1}\cVVmu{n,m-1,k}[\mut,\mu] &= \cVVmu{n,m,k}[\mut,\mu] \\
&= \dfrac{n+m-2k+2}{n+m+2}\cVVmu{n,m+1,k}[\mut,\mu].
\end{align*}
From this and Lemma \ref{lem:VZA} we have that
\begin{align*}
   (n+m+1)      \cVVmu{n,m-1,k}[\mut,\mu] \hargarab{n-2k,m-1}[\mu] 
      & \angPsi{-,m-1}{\pm} \\
  = \frac{1}{\normrat{n-2k,m}[\mu]+1} \cVV{n,m,k}[\mut,\mu]
   (\contra{n-2k,m}{\mp}&[\mu]+\ambig{n-2k,m}{\pm}[\mu]  ),
\end{align*}
and
\begin{align*}
  \frac{1}{n+m+2}\cVVmu{n,m+1,k}[\mut,\mu] \hargarab{n-2k,m+1}[\mu]
      & \angPsi{+,m+1}{\pm} \\
  = \frac{1}{\normrat{n-2k,m}[\mu]+1} \cVV{n,m,k}[\mut,\mu]
    (\contra{n-2k,m}{\mp}& [\mu]- \normrat{n-2k,m}[\mu]\ambig{n-2k,m}{\pm}[\mu]  ).
\end{align*}
Inserting these two relations into the respective sums of
\eqref{eq:Zsum} gives the desired result.
\end{proof}

Proposition \ref{prop:contragenicrelations} provides us with some
information about the intersection of the spaces of contragenic
functions up to degree $n$.

\begin{theo}\label{th:intersection}
  Let $n\geq1$. The following statements hold:
\begin{enumerate}
\item[(i)]  $\contra{n,0}{+}[\mu]\in\N_*^{(n)}[0]$ for all $\mu$;
\item[(ii)] 
  $\contra{n,m}{\pm}[\mu] \notin N_*^{(n)}[0]$
   when $\mu\neq0$ and $1\le m\le n-1$.
\end{enumerate}
\end{theo}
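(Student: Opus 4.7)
The plan is to apply Proposition \ref{prop:contragenicrelations} specialized so that the reference spheroid in the second slot of $\cZZmu{\cdot,\cdot,\cdot}{C}$, $\cZZmu{\cdot,\cdot,\cdot}{A}$ is the unit ball, i.e., taking the limit as that parameter tends to $0$. This specialization is legitimate thanks to Corollary \ref{coro:proplim}, which guarantees the relevant limits of $\cVVmu{n,m,k}[\mut,\mu]$ exist. The result expresses $\contra{n,m}{\pm}[\mu]$ as an explicit linear combination of basic contragenic functions on the ball plus, when $m\ge 1$, an ambigenic correction on the ball. Membership in $\N_*^{(n)}[0]$ can then be read off using the uniqueness of the orthogonal decomposition of harmonic polynomials in $L^2(\Omega_0)$ as a sum of an ambigenic and a contragenic part.

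For part (i), the $m=0$ formula of Proposition \ref{prop:contragenicrelations} carries no ambigenic correction, so
\[
\contra{n,0}{+}[\mu] = \sum_{0\le 2k\le n-1}\cZZmu{n,k}{C}[\mu,0]\,\contra{n-2k,0}{+}[0],
\]
and each summand $\contra{n-2k,0}{+}[0]\in\N^{(n-2k)}[0]\subseteq\N_*^{(n)}[0]$, settling (i). For part (ii), with $1\le m\le n-1$, the corresponding expansion reads
\[
\contra{n,m}{\pm}[\mu] = \sum_k \cZZmu{n,m,k}{C}[\mu,0]\,\contra{n-2k,m}{\pm}[0] + \sum_k \cZZmu{n,m,k}{A}[\mu,0]\,\ambig{n-2k,m}{\pm}[0].
\]
The first sum lies in $\N_*^{(n)}[0]$ and the second in the ambigenic subspace, so $\contra{n,m}{\pm}[\mu]\in\N_*^{(n)}[0]$ iff the ambigenic sum vanishes. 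Since the polynomials $\ambig{n-2k,m}{\pm}[0]$ have mutually distinct degrees and are therefore linearly independent, this is equivalent to every coefficient $\cZZmu{n,m,k}{A}[\mu,0]$ being zero, and it suffices to exhibit one that is not.

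The natural candidate is $k=0$: Corollary \ref{coro:proplim} together with a small computation gives $\cVVmu{n,m,0}[\mu,0]=1$, so the definition \eqref{eq:CZZmu} reduces to
\[
\cZZmu{n,m,0}{A}[\mu,0] = \frac{\normrat{n,m}[\mu]-\normrat{n,m}[0]}{\normrat{n,m}[0]+1}.
\]
The entire theorem therefore rests on the inequality $\normrat{n,m}[\mu]\ne\normrat{n,m}[0]$ for $\mu\ne 0$ and $1\le m\le n-1$. By \eqref{eq:normrat} and the integral formula for $\|\hargarabsol{n,m}{\pm}[\mu]\|_{[\mu]}^2$, this boils down to showing that the ratio $\int_{1}^{1/\mu}P_n^{m+1}(t)P_{n+2}^{m+1}(t)\,dt\,/\,\int_{1}^{1/\mu}P_n^{m-1}(t)P_{n+2}^{m-1}(t)\,dt$ is a genuinely nonconstant function of $\mu$. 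I would establish this by differentiating with respect to the upper endpoint $1/\mu$: the quotient rule together with the fundamental theorem of calculus produces a nonzero rational combination of values $P_n^{m\pm 1}(1/\mu)P_{n+2}^{m\pm 1}(1/\mu)$, and a degree/sign analysis of these classical Legendre products excludes identical cancellation. This nonconstancy step is the principal analytic obstacle; once in hand, (ii) is immediate from the bookkeeping above.
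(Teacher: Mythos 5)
Your overall scaffolding is exactly the paper's: part (i) read off from the $m=0$ formula of Proposition \ref{prop:contragenicrelations} (no ambigenic terms), and part (ii) reduced, via uniqueness of the ambigenic-plus-contragenic decomposition in $L^2(\Omega_0)$ and linear independence of the $\ambig{n-2k,m}{\pm}[0]$, to showing that some coefficient $\cZZmu{n,m,k}{A}[\mu,0]$ is nonzero. The gap is in which coefficient you test. Choosing $k=0$ forces you to prove $\normrat{n,m}[\mu]\neq\normrat{n,m}[0]$ for \emph{every} $\mu\neq0$, and your argument for that is only a sketch — and a sketch that cannot succeed as stated: differentiating the ratio of Legendre integrals with respect to the endpoint $1/\mu$ would at best show that $\mu\mapsto\normrat{n,m}[\mu]$ is nonconstant, but a nonconstant function may still return to its limiting value $\normrat{n,m}[0]$ at isolated $\mu\neq0$, which your claim must exclude; you would need something like strict monotonicity, with a sign analysis valid for all $\mu$, plus separate handling of the oblate case where $\mu$ is imaginary and the endpoint $1/\mu$ leaves the real axis. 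The paper's closing remarks explicitly flag that equations of this kind ``related to \eqref{eq:normrat}'' become transcendental, i.e., this is a genuinely hard analytic route, not a routine verification.

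The paper sidesteps all of this by testing the \emph{top} coefficient instead: the unique $k$ with $2k\in\{n-m,\,n-m+1\}$ (exactly one of these is even, so such $k$ exists in the summation range). There the second case of \eqref{eq:CZZmu} applies, because $\normrat{n-2k,m}=0$ when $m\ge n-2k$, and one gets
\begin{equation*}
  \cZZmu{n,m,k}{A}[\mu,0]
  = \frac{\normrat{n,m}[\mu]}{\normrat{n-2k,m}[0]+1}\,\cVVmu{n,m,k}[\mu,0],
\end{equation*}
with no difference of two $\normrat{}$-values in the numerator. Since $\cVVmu{n,m,k}[\mu,0]=\cVV{n,m,k}\mu^{2k}\neq0$ by Corollary \ref{coro:proplim} (the same computation you used to get $\cVVmu{n,m,0}[\mu,0]=1$), vanishing of this coefficient would force $\normrat{n,m}[\mu]=0$, which is manifestly false by \eqref{eq:normrat}: for $1\le m\le n-1$ it is the square of a nonzero ratio of norms. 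Substituting this choice of $k$ into your bookkeeping closes the proof with no analysis of Legendre integrals whatsoever; as written, your proposal leaves its ``principal analytic obstacle'' unresolved, and it is an obstacle the intended argument never needs to face.
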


\begin{proof} The first statement is an immediate consequence of the
  first formula of Proposition \ref{prop:contragenicrelations}.

  Now consider a basic element $\contra{n,m}{\pm}[\mu]$ of
  $\N_*^{(n)}[\mu]$, with $\mu\not=0$ and $1\leq m\leq n-1$. A
  particular instance of the second formula of Proposition
  \ref{prop:contragenicrelations} is
\[  \contra{n,m}{\pm}[\mu]=\sum_{0\leq 2k\leq n-m+1}
   \big( \cZZmu{n,m,k}{C}[\mu,0] \contra{n-2k,m}{\pm}[0] +
   \cZZmu{n,m,k}{A}[\mu,0]\ambig{n-2k,m}{\pm}[0] \big).
\]
Suppose that $\contra{n,m}{\pm}[\mu]\in\N_*^{(n)}[0]$. Then since
the right hand side is orthogonal to all $\Omega_0$-ambigenics,
\[ \sum_{0\leq 2k\leq n-m+1}
  \cZZmu{n,m,k}{A}[\mu,0]\ambig{n-2k,m}{\pm}[0]=0, 
\]
and so by the linear independence, $\cZZmu{n,m,k}{A}[\mu,0]=0$ for all
$k$. The case in \eqref{eq:CZZmu} where $2k$ is  $n-m$ or
$n-m+1$ tells us that  $\normrat{n,m}[\mu]=0$, which is
manifestly false by \eqref{eq:normrat}. Consequently,
$\contra{n,m}{\pm}[\mu]\not\in\N_*^{(n)}[0]$ as claimed.
\end{proof}

Note that Theorem \ref{th:intersection} does not assert that
$\contra{n,0}{+}[\mu]$ lies in the top-level slice $\N^{(n)}[0]$ of
$\N_*^{(n)}[0]$.

 \begin{coro}
Let $n\geq1$. Then
\[ \dim \bigcap_{\mu\in[0,1)\cup\R^+}\N_*^{(n)}[\mu] \ge n.
\]
\end{coro}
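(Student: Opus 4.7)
The plan is to exhibit $n$ linearly independent harmonic polynomials that lie in $\N_*^{(n)}[\mu]$ simultaneously for every $\mu \in [0,1)\cup i\R^+$. The natural candidates are the $m=0$ basic contragenic polynomials of the unit ball, $\contra{k,0}{+}[0]$ for $k = 1, 2, \ldots, n$.

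The first step is to show that each $\contra{k,0}{+}[0]$ belongs to $\N_*^{(k)}[\mu] \subseteq \N_*^{(n)}[\mu]$ for every admissible $\mu$. For this I would apply the first formula of Proposition \ref{prop:contragenicrelations} with the specialization $\mut = 0$ and $\mu \neq 0$ arbitrary. This specialization is legitimate because Theorem \ref{th:cVVmu} excludes only $\mu = 0$, while $\mut = 0$ is permitted; the coefficient $\cVVmu{k,1,j}[0,\mu]$, and hence $\cZZmu{k,j}{C}[0,\mu]$, is therefore well defined (and indeed matches the limit computed in Corollary \ref{coro:proplim}). The resulting identity
\[ \contra{k,0}{+}[0] = \sum_{0\leq 2j\leq k-1} \cZZmu{k,j}{C}[0,\mu]\,\contra{k-2j,0}{+}[\mu] \]
exhibits $\contra{k,0}{+}[0]$ as a linear combination of basic contragenic polynomials for $\Omega_\mu$, placing it in $\N_*^{(k)}[\mu]$. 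The case $\mu = 0$ is immediate, so $\contra{k,0}{+}[0] \in \bigcap_\mu \N_*^{(n)}[\mu]$ for each $k = 1, \ldots, n$.

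The second step is linear independence of $\{\contra{k,0}{+}[0] : 1 \le k \le n\}$, which follows on the nose from a degree count: by Definition \ref{def:Basic_contragenics} each $\contra{k,0}{+}[0]$ belongs to $\N^{(k)}[0]$ and is a nonzero polynomial of exact degree $k$. Hence the family is linearly independent, yielding $n$ independent elements of the intersection and the claimed dimension bound.

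The only point requiring care is the validity of Proposition \ref{prop:contragenicrelations} with its first parameter equal to $0$. This is not explicit in the statement, but inspection of its proof shows that the derivation carries over verbatim when $\mut = 0$: the single restriction inherited from Theorem \ref{th:cVVmu} is $\mu \neq 0$. A purely analytic alternative is to pass to the limit $\mut \to 0$ via Corollary \ref{coro:proplim}. No genuine computation is required beyond what is already in place.
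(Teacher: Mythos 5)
Your proposal is correct and follows essentially the same route as the paper: the intersection contains the fixed $n$-dimensional span of the $m=0$ basic contragenics, by virtue of the first formula of Proposition \ref{prop:contragenicrelations} (equivalently, Theorem \ref{th:intersection}(i) applied with arbitrary parameters). You merely make explicit two points the paper leaves implicit --- that the specialization $\mut=0$, $\mu\neq0$ is the one permitted by Theorem \ref{th:cVVmu}, and that linear independence of $\contra{k,0}{+}[0]$, $1\le k\le n$, follows from their exact degrees --- both of which are sound.
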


\begin{proof}
  The result is an immediate consequence of  the fact that Theorem
  \ref{th:intersection} is applicable to arbitrary $\mu$, so
the intersection contains a fixed $n$-dimensional subspace of
  $\N_*^{(n)}[0]$.
\end{proof}

It also follows from Theorem \ref{th:intersection} that the common
intersection $\N_0=\bigcap\N_*[\mu]$ of the full spaces of contragenic
functions on spheroids is infinite dimensional, containing all of the
contragenic polynomials $\contra{n,m}{+}[\mu]$ for which $m=0$.  It
seems likely that these contragenic polynomials have special
characteristics because of their simpler structure, cf.\
\eqref{eq:contraformula}. This phenomenon is not yet fully understood. 
Further questions relating to the exact relations among
the spaces $\N_*^{(n)}[\mu]$ still remain open. If the method of the
proof of Theorem \ref{th:intersection} is applied to linear
combinations of the $\contra{n,m}{\pm}[\mu]$ instead of just to these
generators individually, transcendental equations related to
\eqref{eq:normrat} appear. It is not yet known how the angles
between the orthogonal complements of the mode-$0$ subspace
$\N_0^{n}[0]$ in $\N_*^{(n)}[\mu]$, or of their union $\N_0[0]$ in
$\N[\mu]$, vary with $\mu$.



\begin{thebibliography}{99}

\bibitem{Alvarez} {\sc C.\ \'Alvarez-Pe\~{n}a, R.\ M.\ Porter,}
  (2014), {\it Contragenic functions of three variables,} Complex
  Anal.\ Oper.\ Theory, Vol.\ 8, No.\ 2, 409-427.

\bibitem{BBS} {\sc H.\ A.\ Buchdahl, N.\ P.\ Buchdahl, P.\ J.\
Stiles}, (1977), {\it On a relation between spherical and spheroidal
harmonics,} J.\ Phys. A: Math.\ Gen.\ 10:11.

\bibitem{Delanghe2007} {\sc R.\ Delanghe,} (2007), {\it On homogeneous
polynomial solutions of the Riesz system and their harmonic
potentials,} Complex Var.\ Elliptic Equ., Vol.\ 52, No.\ 10-11,
pp.1047--1062.

\bibitem{GMP} {\sc R.\ Garc\'ia A., J.\ Morais, R.\ M.\ Porter,} (2018),
{\it Contragenic functions on spheroidal domains,} Math.\ Methods
Appl.\ Sci., Vol.\ 41, No.\ 7, pp.\ 2575--2589

\bibitem{Garabedian} {\sc P.\ Garabedian}, (1953), {\it Orthogonal harmonic
polynomials,} Pacific J.\ Math.\ Vol.\ 3, No.\ 3, pp.\ 585--603.


\bibitem{Hobson1931} {\sc E.\ Hobson}, (1931), {\it The theory of spherical
and ellipsoidal harmonics,} Cambridge.

\bibitem{Hot} {\sc M.\ Hotine}, (1969), \textit{Mathematical Geodesy}, U.S.\
Department of Commerce, Environmental Science Services Administration.

\bibitem{Joao2009} {\sc J.\ Morais,} (2009), \textit{Approximation by
homogeneous polynomial solutions of the Riesz system in
$\mathbb{R}^3$,} Ph.D. thesis, Bauhaus-Universit\"at Weimar.

\bibitem{MoraisGuerlebeck2012} {\sc J.\ Morais and K.\ G\"{u}rlebeck,}
(2012), {\it Real-part estimates for solutions of the Riesz system in
$\R^3$,} Complex Var.\ Elliptic Equ. 57:5, pp.505--522.

\bibitem{Morais4} {\sc J. Morais,} (2011), {\it A Complete Orthogonal
System of Spheroidal Monogenics} JNAIAM J.\ Numer.\ Anal.\ Ind.\
Appl.\ Math., Vol.\ 6, No.\ 3--4, 2011, pp. 105-119

\bibitem{MoraisGuerlebeck22012} {\sc J.\ Morais and K.\ G\"{u}rlebeck,}
(2012), {\it Bloch's theorem in the context of quaternion analysis,}
Comput. Methods Funct. Theory, Vol.\ 12, No.\ 2, pp.541--558.

\bibitem{Morais5} {\sc J.\ Morais,} (2013) {\it An orthogonal system
of monogenic polynomials over prolate spheroids in $\R^3$}, Math.\
Comput.\ Modelling 57:425--434.

\bibitem{MoraisAG} {\sc J. Morais, K.\ Avetisyan and K.\ G\"urlebeck,}
(2013), {\it On Riesz systems of harmonic conjugates in
$\mathbb{R}^3$,} Math.\ Methods Appl.\ Sci., Vol.\ 36, No.\ 12,
pp.1598--1614.

\bibitem{MoraisNguyenKou2016} {\sc J.\ Morais, H.\ M.\  Nguyen and
K.I. Kou,} (2016), {\it On 3D orthogonal prolate spheroidal
monogenics,} Math.\ Methods Appl.\ Sci., Vol.\ 39, No.\ 4,
pp.635--648.

\bibitem{Morais_Habilitation2018} {\sc J.\ Morais,} (2018), {\it A
Quaternionic Version Theory related to Spheroidal Functions,}
Habilitation thesis. Preprint.

\bibitem{MorseFeshbach1953} {\sc P.\ Morse and H.\ Feshbach,} (1953), {\it
Methods of Theoretical Physics,} I, II, McGraw--Hill, New York.

\bibitem{NguyenGuerlebeckMoraisBock2014} {\sc H.\ M.\ Nguyen,
K. G\"{u}rlebeck, J. Morais and S. Bock,} (2014), {\it On orthogonal
monogenics in oblate spheroidal domains and recurrence formulae,}
Integral Transforms Spec.\ Funct., Vol.\ 25, No.\ 7, pp. 513--527.

\bibitem{NikiforovUvarov1988} {\sc A.\ Nikiforov and V.\ Uvarov,} (1988), {\it Special functions of Mathematical Physics,} Birkha\"{u}ser Verlag, Basel.

\bibitem{Szego1935} {\sc G.\ Szeg\"o,} (1935), {\it A problem concerning orthogonal polynomials,} Trans.\ Amer.\ Math.\ Soc.\ 37, pp.196--206.

\end{thebibliography}
\end{document}